\documentclass[12pt,letterpaper]{amsart}
\usepackage{geometry}
\geometry{body={6.9in,9.6in}, centering}
\usepackage{amsmath,amssymb,amsfonts,amsthm}
\usepackage{latexsym,url,setspace,hyperref,mathpazo}
\newtheorem{theorem}{Theorem}
\newtheorem{lemma}{Lemma}
\newtheorem{corollary}{Corollary}
\theoremstyle{remark}
\newtheorem{remark}{Remark}

\newcommand{\C}{\mathbb{C}}
\newcommand{\disk}{\mathbb{D}}
\newcommand{\D}{\Omega}

\newcommand{\im}{\text{Im}}
\newcommand{\Dc}{\overline{\Omega}}
\newcommand{\dbar}{\overline{\partial}}
\newcommand{\ds}{\displaystyle}
\onehalfspace
\title[Compactness of products of Hankel operators]{Compactness of products of
Hankel operators on convex Reinhardt domains in $\C^2$}
\author{\u{Z}eljko \u{C}u\u{c}kovi\'c}
\author{S\"{o}nmez \c{S}ahuto\u{g}lu}
\email{Zeljko.Cuckovic@utoledo.edu, Sonmez.Sahutoglu@utoledo.edu}
\address{University of Toledo, Department of Mathematics \& Statistics, 
Toledo, OH 43606, USA}
\subjclass[2010]{Primary 47B35; Secondary 32A07}
\keywords{Hankel operators, analytic disks, Reinhardt domains}
\date{\today}
\begin{document}

\begin{abstract}
Let $\D$ be a piecewise smooth bounded convex Reinhardt  domain in $\C^2.$ Assume
that  the  symbols $\phi$ and $\psi$ are  continuous on $\Dc$  and harmonic on the disks
in the boundary of $\D.$ We show that if the product of Hankel operators  $H^*_{\psi}
H_{\phi}$  is compact on the Bergman space of $\D,$  then on any disk in the boundary of
$\D,$ either $\phi$ or $\psi$ is holomorphic.
\end{abstract}

\maketitle

This paper is a sequel to our two previous  papers
\cite{CuckovicSahutoglu09,CuckovicSahutoglu10} on compactness of Hankel
operators on Bergman spaces of domains in $\C^n.$ In the first paper we
studied compactness of a single Hankel operator with a smooth symbol on quite
general domains. We note that in this paper smooth means $C^{\infty}$-smooth. We used
$\dbar$ methods to relate the compactness property of Hankel operators to the behavior of
the symbol on the analytic disks in the boundary of the domain. The most complete result
is the following theorem in $\C^2.$ Here $H_{\phi}$ denotes the Hankel operator on the
Bergman space $A^2(\D)$ with a symbol $\phi.$ Furthermore, $\partial \D$  and $\mathbb{D}$
denote the boundary of $\D$ and  the open unit disk in the complex plane,
respectively. 

\begin{theorem}[\cite{CuckovicSahutoglu09}]\label{Thm1}
Let $\D$ be a smooth bounded convex domain in $\C^2$ and $\phi \in C^{\infty}(\Dc).$ 
Then $H_{\phi}$ is compact on  $A^2 (\D)$ if and only if $\phi\circ f$ is holomorphic for
any holomorphic mapping $f:\mathbb{D}\to \partial \D.$
\end{theorem}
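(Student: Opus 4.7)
\textbf{Proof proposal for Theorem~\ref{Thm1}.}

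The plan is to express $H_\phi$ via the $\dbar$-Neumann operator and then treat each direction separately. For $f\in A^2(\D)$, holomorphy of $f$ gives $\dbar(\phi f)=f\,\dbar\phi$, so $H_\phi f=\phi f - P(\phi f)$ is the canonical ($L^2$-minimal) solution of $\dbar u=f\,\dbar\phi$. Writing $N_1$ for the $\dbar$-Neumann operator on $(0,1)$-forms, this gives the Kohn-type representation $H_\phi=\dbar^{*}N_1 M_{\dbar\phi}|_{A^2(\D)}$, where $M_{\dbar\phi}$ denotes multiplication by the $(0,1)$-form $\dbar\phi$. Compactness of $H_\phi$ is thus equivalent to compactness of the map $f\mapsto \dbar^{*}N_1(f\,\dbar\phi)$ from $A^2(\D)$ into $L^2(\D)$, and the proof will exploit this.

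For \emph{necessity}, suppose some holomorphic $f_0:\mathbb{D}\to\partial\D$ has $\phi\circ f_0$ non-holomorphic. Since $\D$ is smoothly bounded and convex in $\C^2$, a maximum-modulus-plus-convexity argument forces the image of $f_0$ to lie in an affine disk in $\partial\D$. After a unitary change of coordinates this disk may be taken inside $\{0\}\times r\mathbb{D}$, with $\D$ locally $\{\re z_1 < g(z_2,\im z_1)\}$ near a chosen point where an appropriate $\bar{z}_2$-derivative of $\phi$ is nonzero. I would then build a sequence $\{F_n\}\subset A^2(\D)$ of unit norm, concentrating tangentially along the disk (using weighted monomials in $z_2$) and radially into the boundary (using holomorphic ``peak'' factors in $z_1$), which converges weakly to zero. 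Pairing $H_\phi F_n$ against a second such sequence and using the representation above, one gets a quantity essentially equal to an integral of $\dbar\phi$ against a concentrating bump near the chosen point; this stays bounded below, contradicting compactness.

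For \emph{sufficiency}, assume $\phi\circ f$ is holomorphic for every holomorphic $f:\mathbb{D}\to\partial\D$. The hypothesis translates into the statement that the complex tangential component of $\dbar\phi$ vanishes along every disk in $\partial\D$. I would localize via a partition of unity $1=\sum_j\chi_j$ on $\Dc$, splitting $\dbar\phi=\sum_j\chi_j\,\dbar\phi$. On pieces supported in the interior or near boundary points lying on no disk, interior regularity together with the local subelliptic estimate for $\dbar$-Neumann (available off the set of disks on a smooth bounded convex boundary in $\C^2$) makes the corresponding piece of $\dbar^{*}N_1 M_{\chi_j\dbar\phi}$ compact on $A^2(\D)$. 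On pieces supported near a boundary disk, the tangential vanishing of $\dbar\phi$ combined with the $\dbar$-Neumann estimates adapted to the disk geometry is used to show the remaining pieces are also compact. Summing then gives compactness of $H_\phi$.

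The main obstacle will be the sufficiency direction near a boundary disk: one must show that tangential vanishing of $\dbar\phi$ along the disk compensates for the fact that $N_1$ itself fails to be compact precisely because of that disk. The technical heart is a quantitative estimate relating the order of vanishing of the tangential part of $\dbar\phi$ transverse to the disk to the mapping properties of $N_1$ in that direction; in $\C^2$ the convex geometry is rigid enough (disks in $\partial\D$ are affine and arranged in smooth families) that such an estimate can be pushed through.
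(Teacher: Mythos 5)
Your starting point --- the canonical-solution identity $H_\phi f=\dbar^{*}N_1(f\,\dbar\phi)$ for $f\in A^2(\D)$ --- is correct and is indeed the framework of the cited proof: note that the present paper does not prove Theorem~\ref{Thm1} at all, it imports it from \cite{CuckovicSahutoglu09}, and the introduction confirms that the original argument is a $\dbar$-methods argument of exactly this flavor. Your necessity sketch is also in the right spirit: convexity and the maximum principle do force the image of $f_0$ into an affine disk contained in a supporting complex line, and a weakly null sequence concentrating at a point of the disk where $\partial\phi/\partial\bar z_2\neq 0$ can be made to give a lower bound on $\|H_\phi F_n\|$. The details there (in particular, that the Bergman projection term does not cancel the main contribution in the pairing) are nontrivial but standard in outline.

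The genuine gap is in sufficiency, which is the substance of the theorem. The step you call ``the technical heart'' is precisely the statement to be proved, and you offer no argument for it beyond asserting that an estimate ``can be pushed through.'' Two concrete obstructions. First, the hypothesis only forces the component of $\dbar\phi$ \emph{tangent to a boundary disk to vanish on that disk}; the complex-normal component of $\dbar\phi$ is completely unrestricted there, so the piece of $\dbar^{*}N_1M_{\chi_j\dbar\phi}$ supported near a disk cannot be handled by smallness of $\dbar\phi$ alone --- one must invoke and quantify the fact that the $\dbar$-Neumann operator has compactness-type gains in the complex-normal direction, and nothing in your outline does this. Second, your localization presumes that the disks are ``arranged in smooth families,'' which is false in general: for a smooth bounded convex domain in $\C^2$ the boundary disks are affine, but their parameter set can be an arbitrary compact set (for instance a Cantor family of parallel disks), so there need be no neighborhood of ``a single disk'' that avoids the others, and the clean partition into ``near one disk'' and ``away from all disks'' does not exist. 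What is actually needed is a quantitative compactness estimate of the form $\|H_\phi f\|\leq \varepsilon\|f\|+C_\varepsilon\|f\|_{W^{-1}(\D)}$, obtained by combining the Fu--Straube analysis of where compactness of $N_1$ fails on convex domains with the affine structure of the disks and the vanishing of the tangential part of $\dbar\phi$ on their union; without some version of that input your outline does not close.
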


In the second paper we studied compactness of products of two Hankel
operators on the polydisk. Notable is the absence of  $\dbar$ methods: the
domain is simple enough to be treated by reducing the dimension by one.  For
simplicity, we state the main result in $\C^2$ only. 

\begin{theorem}[\cite{CuckovicSahutoglu10}]\label{Thm2}
Let $\D$ be the bidisk in $\C^2$ and the symbols $\phi,\psi\in C (\Dc)$ such
that $\phi\circ f$ and $\psi\circ f$ are harmonic for any holomorphic mapping 
$f:\mathbb{D}\to \partial \D.$ Then $H^*_{\psi} H_{\phi}$ is compact on $A^2
(\D)$ if and only if for any holomorphic function $f:\mathbb{D}\to \partial \D,$
either $\phi\circ f$ or $\psi\circ f$ is holomorphic.  
\end{theorem}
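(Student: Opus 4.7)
The plan is to reduce both directions to a known one-variable fact on the disk: for bounded harmonic symbols $\phi,\psi$ on $\disk$, the Axler--\u{C}u\u{c}kovi\'c--Rao characterization of vanishing Toeplitz semi-commutators yields
\[
H^{*}_{\psi}H_{\phi}\equiv 0 \ \text{on}\ A^2(\disk)\ \iff\ \phi\ \text{or}\ \psi\ \text{is holomorphic on}\ \disk.
\]
The boundary of $\D=\disk\times\disk$ contains exactly two families of analytic disks, namely $\{z_0\}\times\disk$ and $\disk\times\{w_0\}$ with $|z_0|=|w_0|=1$; by symmetry between the variables, it suffices to treat the first family. The crucial structural ingredient is the tensor product decomposition $A^2(\D)\cong A^2(\disk)\otimes A^2(\disk)$ and the corresponding factorization of the Bergman projection $P=P_z\otimes P_w$.

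For the necessity direction I argue by contrapositive. If neither $\phi(z_0,\cdot)$ nor $\psi(z_0,\cdot)$ is holomorphic for some $|z_0|=1$, pick $u\in A^2(\disk)$ with $v:=H^{*,\disk}_{\psi(z_0,\cdot)}H^{\disk}_{\phi(z_0,\cdot)}u\neq 0$. Choose $a_n\to z_0$ from inside $\disk$ and set $f_n(z,w)=k_{a_n}(z)u(w)$, where $k_{a_n}$ is the normalized Bergman reproducing kernel of $\disk$ at $a_n$. Since $k_{a_n}\to 0$ weakly in $A^2(\disk)$, $f_n\to 0$ weakly in $A^2(\D)$. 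The heart of the argument is a ``frozen slice'' identity: decomposing $\phi(z,w)=\phi(z_0,w)+[\phi(z,w)-\phi(z_0,w)]$ and using $P=P_z\otimes P_w$ gives
\[
P\bigl(\phi(z_0,w)\,k_{a_n}(z)\,u(w)\bigr)=k_{a_n}(z)\,P_w\bigl(\phi(z_0,\cdot)u\bigr)(w),
\]
so the frozen piece contributes exactly $k_{a_n}(z)\,H^{\disk}_{\phi(z_0,\cdot)}u(w)$ to $H_\phi f_n$. The error is controlled because $\phi$ is uniformly continuous on $\Dc$ while $\int_{|z-z_0|>\delta}|k_{a_n}|^2\,dA\to 0$, so $\|(\phi-\phi(z_0,\cdot))k_{a_n}u\|_{L^2(\D)}\to 0$. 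One more application of the same identity to $H^*_\psi$ yields $H^*_\psi H_\phi f_n=k_{a_n}(z)\,v(w)+o(1)$ in $L^2(\D)$. Since $\|k_{a_n}v\|=\|v\|\neq 0$, this contradicts compactness of $H^*_\psi H_\phi$.

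For sufficiency, take any weakly null $\{f_n\}\subset A^2(\D)$ and aim for $\|H^*_\psi H_\phi f_n\|\to 0$. The sets $E_\phi=\{z_0\in\partial\disk:\phi(z_0,\cdot)\ \text{is holomorphic}\}$ and $E_\psi$ (defined analogously) are closed by continuity of $\phi,\psi$ and cover $\partial\disk$ by hypothesis. Using a continuous partition of unity on $\partial\disk$ subordinate to thickenings of $E_\phi$ and $E_\psi$, together with an analogous decomposition in the $w$-variable for the second family of boundary disks, the same frozen-slice estimate, run in reverse, reduces each local contribution to $\|H^*_\psi H_\phi f_n\|$ to a one-variable semi-commutator that vanishes by the disk fact above, so the whole norm tends to zero. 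The main obstacle, in both directions, is making the frozen-slice asymptotic fully rigorous despite the unboundedness of $k_{a_n}$ near the boundary: one must control the $L^2(\D)$ norm of $(\phi-\phi(z_0,\cdot))k_{a_n}u$ uniformly in $w$ and then verify that this $o(1)$ control survives after two Bergman projections on $\D$.
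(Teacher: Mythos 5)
First, note that the paper you were given does not actually prove Theorem \ref{Thm2}: it is imported from \cite{CuckovicSahutoglu10}, and the present paper only proves the analogue of the \emph{necessity} direction (Theorem \ref{ThmMain}) for general convex Reinhardt domains. Your necessity argument is essentially correct and is the natural bidisk specialization of what the paper does: because $P^{\D}=P_z\otimes P_w$ on $\disk\times\disk$, your frozen-slice identity is exact, the error term $\|(\phi-\phi(z_0,\cdot))k_{a_n}u\|_{L^2(\D)}$ is controlled by uniform continuity of $\phi$ on $\Dc$ together with $\int_{|z-z_0|>\delta}|k_{a_n}|^2\,dV\to 0$, and the $o(1)$ trivially survives the projections since $P^{\D}$ is a contraction on $L^2(\D)$ --- so the ``main obstacle'' you flag at the end is not actually an obstacle. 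The paper's Theorem \ref{ThmMain} has to replace your $k_{a_n}\otimes u$ by $f_1(z)g_j(w)$ with $g_j(w)=a_jw^{-\alpha_j}$ concentrating on the boundary disk, and needs Lemmas \ref{Lem3}--\ref{Lem6}, precisely because general convex Reinhardt domains lack the tensor structure; on the bidisk your route is cleaner. One correction: the one-variable input (nonvanishing of the semicommutator when both harmonic symbols are non-holomorphic) is Zheng's theorem \cite{Zheng89} (see also \cite{AhernCuckovic01}), not an Axler--\v{C}u\v{c}kovi\'c--Rao result; this is what the paper itself invokes.

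The genuine gap is the sufficiency direction, which is the harder half of Theorem \ref{Thm2} and which your sketch does not establish. ``A partition of unity subordinate to thickenings of $E_\phi$ and $E_\psi$, with the frozen-slice estimate run in reverse'' is not an argument, for at least three reasons. (1) Writing $H^*_\psi H_\phi=\sum_i H^*_\psi H_{\chi_i\phi}$ with cutoffs $\chi_i(z)$ produces symbols $\chi_i\phi$ whose vertical slices $\chi_i(z_0)\phi(z_0,\cdot)$ are holomorphic only for $z_0\in E_\phi$, not on the open thickening, and whose horizontal slices $\chi_i(z)\phi(z,w_0)$ are in general no longer harmonic at all; so the one-variable vanishing theorem does not apply to the pieces, and you would need a quantitative version (``slice symbol close to holomorphic implies semicommutator small'') plus a genuine localization principle for the operator. (2) Near the distinguished boundary, a point $(z_0,w_0)$ with $|z_0|=|w_0|=1$ lies on two boundary disks, and the hypothesis permits the mixed case where $\phi$ is holomorphic along $\{z_0\}\times\disk$ while only $\psi$ is holomorphic along $\disk\times\{w_0\}$; a decomposition in the $z$-variable alone cannot see this interaction, which is exactly where the work in \cite{CuckovicSahutoglu10} lies. (3) The frozen-slice asymptotic in your necessity argument works because the test functions concentrate at a single boundary point $z_0$, giving a canonical slice to freeze; an arbitrary weakly null sequence $\{f_n\}$ has no such point, so there is nothing to ``run in reverse'' without a substantial additional idea.
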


In this paper we treat  domains that are  more general  than a polydisk (see Theorem
\ref{ThmMain}). A domain $\D\subset \C^n$ is called Reinhardt if $(z_1,\ldots,z_n)\in \D$
and $\theta_1,\ldots,\theta_n\in \mathbb{R}$ imply that
$(e^{i\theta_1}z_1,\ldots,e^{i\theta_n}z_n)\in \D.$ Namely, the domain $\D$ is
circular in each variable.  The ball and the polydisk are the best 
known examples of Reinhardt domains. 

The following theorem is the main result of our paper. As before, the analyticity of the
symbols is a necessary condition for compactness of the product of Hankel operators,
provided that their symbols are harmonic on the disks in the boundary.

\begin{theorem}\label{ThmMain}
Let $\D$ be a piecewise smooth bounded convex Reinhardt  domain in $\C^2.$ Assume
that the  symbols $\phi,\psi\in C(\Dc)$ are such that $\phi\circ f$ and $\psi\circ f$ are 
harmonic for every holomorphic function $f:\disk \to \partial\D.$ If $H^*_{\psi} H_{\phi}$
is compact on $A^2 (\D)$ then for  every holomorphic function  $f:\disk \to \partial\D$
either $\phi\circ f $ or $\psi\circ f$ is holomorphic.
\end{theorem}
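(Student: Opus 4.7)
I argue by contradiction: suppose there is an analytic disk $f:\disk\to\partial\D$ on which both $\phi\circ f$ and $\psi\circ f$ are non-holomorphic (they are harmonic by hypothesis). The goal is to construct test sequences in $A^2(\D)$ that witness the failure of compactness of $H^*_\psi H_\phi$.

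\emph{Step 1 (Standard form for the disk).} First I would exploit the convex Reinhardt structure. The logarithmic shadow $\{(\log|z_1|,\log|z_2|):(z_1,z_2)\in\D\}$ is convex in $\mathbb{R}^2$, and for a holomorphic $f=(f_1,f_2):\disk\to\partial\D$ the subharmonic functions $\log|f_1|$ and $\log|f_2|$ are forced onto the boundary of this convex shadow. The maximum principle then shows that one of $|f_1|,|f_2|$ is constant, and Reinhardt symmetry forces the coordinate itself to be constant. After relabeling, $f(w)=(a,g(w))$, with the image lying in a rotation-invariant flat slice $\{a\}\times D_a\subset\partial\D$, where $D_a\subset\C$ is a disk centered at the origin. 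Setting $\tilde\phi(\zeta)=\phi(a,\zeta)$ and $\tilde\psi(\zeta)=\psi(a,\zeta)$ on $D_a$, and letting $g$ vary over all holomorphic maps $\disk\to D_a$, the hypotheses imply that $\tilde\phi$ and $\tilde\psi$ are harmonic but not holomorphic on $D_a$. Decompose $\tilde\phi=h_\phi+\overline{g_\phi}$ and $\tilde\psi=h_\psi+\overline{g_\psi}$; the holomorphic functions $g_\phi,g_\psi$ are non-constant.

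\emph{Step 2 (Test sequences and reduction to the slice).} Since $\D$ is Reinhardt, normalized monomials form an orthonormal basis of $A^2(\D)$, and high powers of $z_1$ concentrate on the face $\{|z_1|=|a|\}$. I would take
\[
u_n(z_1,z_2)=\frac{z_1^n\,p(z_2)}{\|z_1^n\,p(z_2)\|_{A^2(\D)}},\qquad v_n(z_1,z_2)=\frac{z_1^n\,q(z_2)}{\|z_1^n\,q(z_2)\|_{A^2(\D)}},
\]
with holomorphic polynomials $p,q$ to be selected. A direct computation with the orthogonal basis of monomials shows $u_n,v_n\to 0$ weakly, so compactness of $H^*_\psi H_\phi$ would force $\langle H_\phi u_n, H_\psi v_n\rangle_{A^2(\D)}\to 0$. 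The crux of the proof is to establish that in the limit
\[
\langle H_\phi u_n, H_\psi v_n\rangle_{A^2(\D)} \longrightarrow c\,\bigl\langle \widetilde{H}_{\tilde\phi}\,p,\; \widetilde{H}_{\tilde\psi}\,q\bigr\rangle_{A^2(D_a)}
\]
for some $c>0$, where $\widetilde{H}_{\tilde\phi}$ denotes the one-dimensional Hankel operator on the Bergman space of the slice $D_a$. Granting the reduction, choosing $p=g_\phi'$ and $q=g_\psi'$ produces a non-zero limit, because the anti-holomorphic components $\overline{g_\phi},\overline{g_\psi}$ of $\tilde\phi,\tilde\psi$ are non-constant; this is the contradiction.

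\emph{Main obstacle.} The hardest step is the asymptotic reduction to the one-dimensional slice. Unlike the polydisk of Theorem \ref{Thm2}, $\D$ is not a product, so its Bergman projection does not factor as a tensor product. Reinhardt symmetry is the essential tool: because monomials are orthogonal in $A^2(\D)$, the projection $P_\D$ is diagonal on Fourier modes in $\arg z_1$, and expanding $\phi$ in such a series shows that $P_\D(\phi\, z_1^n p(z_2))$ reduces to a weighted one-dimensional Bergman projection with weights proportional to $\|z_1^n z_2^k\|_{A^2(\D)}^2$. A Laplace-type analysis of these weights for large $n$, together with dominated convergence in the mode index $k$, should show that the normalized $n$-th Fourier mode of $P_\D$ converges to the Bergman projection on $D_a$. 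A secondary concern is handling the piecewise-smooth structure of $\partial\D$ and both coordinate types of boundary disks; this is resolved by symmetry, running the argument once for each coordinate.
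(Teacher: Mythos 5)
Your overall strategy --- argue by contradiction, reduce to a coordinate boundary disk, use the fact that for harmonic non-holomorphic symbols the one-dimensional Hankel product pairing is nonzero, and test compactness against a weakly null sequence concentrating at that disk --- is the same as the paper's. Step 1 is essentially the paper's Lemma \ref{Lem2} (proved there by showing that a boundary disk of a convex domain is affine and then running a sign analysis on the Hessian of the defining function); your log-shadow sketch would still need to rule out boundary segments of the logarithmic image on which $\alpha\log|f_1|+\beta\log|f_2|$ is constant with both $f_1,f_2$ non-constant, which uses convexity of $\D$ itself and not just logarithmic convexity. The nonvanishing of $\langle \widetilde{H}_{\tilde\phi}p,\widetilde{H}_{\tilde\psi}q\rangle_{A^2(D_a)}$ for some polynomials $p,q$ is Zheng's theorem together with the paper's Lemma \ref{Lem5}; your specific choice $p=g_\phi'$, $q=g_\psi'$ is neither justified nor needed.

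The genuine gap is in Step 2, and it is not only that the ``Laplace-type analysis'' is deferred: the proposed test functions are the wrong ones. Since $u_n$ and $v_n$ live on the same Fourier mode in $\arg z_1$, the measure $|z_1|^{2n}\,dV$ concentrates uniformly on the whole torus orbit $\{|z_1|=|a|\}\times D_a$ of your disk, and carrying out your mode-by-mode computation the limit of $\langle H_\phi u_n,H_\psi v_n\rangle$ is (up to a positive constant) the \emph{average over} $\theta$ of $\langle \widetilde{H}_{\phi_\theta}p,\widetilde{H}_{\psi_\theta}q\rangle_{A^2(D_a)}$, where $\phi_\theta=\phi(|a|e^{i\theta},\cdot)$. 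That average can vanish even when every individual term is nonzero. Concretely, on the bidisk take $\phi=\overline{z_2}$ and $\psi=z_1\overline{z_2}$: both restrict to harmonic, non-holomorphic functions on every vertical boundary disk, yet $H_\phi(z_1^np)=z_1^nH^{\disk}_{\overline{z_2}}p$ and $H_\psi(z_1^nq)=z_1^{n+1}H^{\disk}_{\overline{z_2}}q$ are orthogonal, so $\langle H_\phi u_n,H_\psi v_n\rangle=0$ for every $n$ while $H_\psi^*H_\phi$ is not compact. Repairing this requires either shifting the power in $v_n$ by a symbol-dependent amount, or, as the paper does, abandoning monomial concentration: the paper translates the domain so that the chosen disk sits over the single boundary point $w=0$ of the shadow $H\subset\{\im w<0\}$ and uses $g_j(w)=a_jw^{-\alpha_j}$ with $\alpha_j\to1^-$ and $\|g_j\|_{L^2(H)}=1$, which concentrates at that one disk rather than at its full Reinhardt orbit, and also lets one replace $\phi,\psi$ by the $w$-independent symbols $\phi_0,\psi_0$ up to errors tending to zero. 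Even then, the reduction to the slice is where all the work lies --- the convergence of sliced Bergman projections and Hankel pairings as $\Delta_w\to\Delta_0$ (Lemmas \ref{Lem3}, \ref{Lem4}, \ref{Lem6}) and the long estimate showing that the cross term in \eqref{Eqn3} tends to zero --- so the step you defer is the substance of the theorem.
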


The proof of Theorem \ref{ThmMain} uses convexity and rotational symmetry of the domain in
a significant way. If there is a disk $\Delta$ in the boundary of a convex Reinhardt
domain $\D$ then there are disks in $\Dc$ nearby $\Delta$ of at least the same size.
Furthermore, these disks ``converge" to $\Delta.$ This geometric property is an important
ingredient in our proof. 

\begin{remark}
Even though Theorem \ref{Thm1} is stated for symbols that are  smooth up to
the boundary and domains with smooth boundaries, the proof shows that the theorem is still
true under reasonably weaker smoothness assumptions. In the case of the polydisk Le
\cite{Le10} studied compactness of Hankel operators with symbols continuous on the
closure of the polydisk.
\end{remark}

\begin{remark}
Products of Hankel operators can be viewed as semicommutators of Toeplitz
operators. Several authors have studied compactness of these semicommutators on
the unit disk $\disk$ and the polydisk $\disk^n.$ Zheng \cite{Zheng89}
characterized compact semicommutators of Toeplitz operators with symbols that
are harmonic on $\disk.$ Later Ding and Tang \cite{DingTang01}, Choe, Koo, and
Lee \cite{ChoeKooLee04}, and Choe, Lee, Nam, and Zheng \cite{ChoeLeeNamZheng07}
extended this result to  semicommutators of Toeplitz operator acting on the
Bergman  space of $\disk^n$ with the assumption that the symbols are
pluriharmonic functions on $\disk^n.$ Notice that the symbols in Theorem
\ref{Thm2} are assumed to be continuous up to the boundary but pluriharmonic on
the disks in the boundary of $\D$ only.
\end{remark}

\begin{remark}
The class of domains to which Theorem \ref{ThmMain} applies includes many
more domains other than the bidisk. For example, it includes the intersection
of Reinhardt domains such as $(\disk\times\disk) \cap B(0,(1+\sqrt{2})/2)$
where $B(p,r)$ denotes the ball centered at $p$ with radius $r.$
\end{remark}

\begin{remark} 
If there is no disk in the boundary of a convex domain then the $\dbar$-Neumann operator
is compact (see \cite[Theorem 1.1]{FuStraube98} or \cite[Theorem 4.26]{StraubeBook}); in
turn, this implies that the Hankel operator with a symbol that is continuous on
the closure of the domain is compact (see \cite[Proposition 4.1]{StraubeBook}).
Hence, if a bounded convex domain does not have a disk in the boundary then the
product of Hankel operators with symbols continuous on the closure of the domain
is compact. For more information about Reinhardt domains we refer the reader to
\cite{JarnickiPflugBook2,KrantzBook,RangeBook}.
\end{remark}

\section*{Some Background Information and Lemmas}
Let $\D$ be a bounded domain in $\C^n$ and $A^2(\D)$ denote the Bergman space, 
the set of holomorphic functions that are square integrable on $\D$ with
respect to the Lebesgue measure $V$.  Unless we integrate on a subdomain of
$\D,$ the norm $\|. \|_{L^2(\D)}$ is denoted by $\|. \|$ and the complex inner
product $\langle .,.\rangle_{L^2(\D)}$ by $\langle .,.\rangle.$

Let $P^{\D}$ denote the Bergman projection on $\D$, the orthogonal projection
from $L^2(\D)$ onto $A^2(\D).$ The Toeplitz and Hankel operators  with symbol
$\phi\in L^{\infty}(\D)$ are defined on $A^2(\D)$ by $T^{\D}_{\phi}f=P^{\D}(\phi
f)$ and $H^{\D}_{\phi}f=\phi f-P^{\D}(\phi f),$ respectively. Notice that the
range of $H^{\D}_{\phi}$ is a subspace of the orthogonal complement of $A^2(\D)$
in $L^2(\D).$ Then one
can define the product of two Hankel operators with symbols $\psi$ and $\phi$
as $(H^{\D}_{\psi})^{*}H^{\D}_{\phi}:A^2(\D)\to A^2(\D),$ where 
$(H^{\D}_{\psi})^{*}$ denotes the Hilbert space adjoint of $H^{\D}_{\psi}.$ 
When it is clear from the context on which domain we are working on,  we will
omit the domain superscripts on the operators $P,T_{\phi},$ and $H_{\phi}.$ 

It is well known that this product can be written as a semicommutator of
Toeplitz operators. Namely, 
\begin{align}\label{Eqn1}
H^{*}_{\psi}H_{\phi}=T_{\overline{\psi}\phi}-T_{\overline{\psi}}T_{\phi}.
\end{align}
For more information about these operators we suggest the reader consult
\cite{ZhuBook,Axler88}.

We now present and prove several key lemmas that will be used in the proof of the main
theorem. They represent our idea that geometry, analysis, and approximation intertwine in
an interesting manner and they enable us to prove the main result in this paper. 

The first lemma is simple and it allows us to rewrite the product of two Hankel operators
in a different way than the semicommutator of Toeplitz operators.

\begin{lemma} \label{Lem1}
Let $\D$ be a domain in $\C^n$ and $\phi,\psi \in L^{\infty}(\D).$ Then 
$H^{*}_{\psi}H_{\phi}=PM_{\overline \psi} H_{\phi}$ where $M_{\overline \psi}$ denotes
the product by $\overline \psi.$
\end{lemma}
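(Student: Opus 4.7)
The plan is a direct computation from the definitions; no deep ingredient is needed, and one can either invoke the semicommutator identity \eqref{Eqn1} already recorded in the paper or re-derive it on the fly.

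First I would fix $f\in A^2(\D)$ and expand
\begin{align*}
PM_{\overline\psi}H_{\phi}f
&= P\bigl(\overline\psi\,(\phi f - P(\phi f))\bigr)\\
&= P(\overline\psi\,\phi f)-P\bigl(\overline\psi\,P(\phi f)\bigr)\\
&= T_{\overline\psi\phi}f - T_{\overline\psi}T_{\phi}f.
\end{align*}
Comparing with the identity $H^{*}_{\psi}H_{\phi}=T_{\overline\psi\phi}-T_{\overline\psi}T_{\phi}$ from \eqref{Eqn1} gives the claim immediately.

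If one prefers not to appeal to \eqref{Eqn1}, a self-contained route is to compute $H^{*}_{\psi}$ from first principles. For $u\in L^{2}(\D)$ and $g\in A^{2}(\D),$
\[
\langle g,H^{*}_{\psi}u\rangle=\langle H_{\psi}g,u\rangle=\langle (I-P)(\psi g),u\rangle=\langle \psi g,(I-P)u\rangle=\langle g,\overline\psi(I-P)u\rangle,
\]
so $H^{*}_{\psi}u=P\bigl(\overline\psi(I-P)u\bigr).$ Now take $u=H_{\phi}f.$ Since $H_{\phi}f$ lies in the orthogonal complement of $A^{2}(\D)$ in $L^{2}(\D),$ we have $(I-P)H_{\phi}f=H_{\phi}f,$ and therefore $H^{*}_{\psi}H_{\phi}f=P(\overline\psi H_{\phi}f)=PM_{\overline\psi}H_{\phi}f,$ as desired.

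There is no real obstacle here; the lemma is essentially a rearrangement of definitions. Its value, rather than its difficulty, is what matters: it trades the two-Hankel product $H^{*}_{\psi}H_{\phi}$ for the Bergman projection composed with a bounded multiplier acting on the single Hankel operator $H_{\phi}.$ This is the form that will presumably be convenient later, since compactness of $H^{*}_{\psi}H_{\phi}$ can then be tested by applying $M_{\overline\psi}H_{\phi}$ to sequences tending weakly to zero in $A^{2}(\D)$ and using whatever is known about the single operator $H_{\phi}$ from Theorem~\ref{Thm1}.
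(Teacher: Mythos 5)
Your proposal is correct, and the self-contained second route is essentially the paper's own argument: the paper proves the identity by testing against $g\in A^2(\D)$ and using that $H_{\phi}f$ is orthogonal to $A^2(\D)$, which is exactly the content of your computation of $H^{*}_{\psi}$ and the observation $(I-P)H_{\phi}f=H_{\phi}f$. The first route via \eqref{Eqn1} is an equivalent rearrangement, so there is nothing to add.
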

\begin{proof} 
Let $f,g\in A^2(\D).$ Then we have
\[\left\langle H^{*}_{\psi}H_{\phi} f,g \right\rangle = 
\left\langle H_{\phi} f,H_{\psi}g \right\rangle  =  
\left\langle H_{\phi} f,\psi g \right\rangle= 
\left\langle \overline{\psi} H_{\phi} f, g \right\rangle= 
\left\langle P \overline{\psi} H_{\phi} f, g \right\rangle.\]
Therefore,  $H^{*}_{\psi}H_{\phi}=PM_{\overline \psi} H_{\phi}.$
\end{proof}

The next lemma gives us  an important information about the disks in the boundary of
complete Reinhard domains in $\C^2.$ It shows that piecewise smooth bounded complete
Reinhardt domains in $\C^2$ can have vertical or horizontal disks only. This will allow
us to use the slicing method to approach the disks by horizontal and vertical slices of
the domain itself.  
 
\begin{lemma}\label{Lem2}
Let $\D$ be a piecewise smooth bounded complete Reinhardt domain in $\C^2$ and
let  $F=(f, g):\disk\to \partial \D$ be a holomorphic function. Then either $f$ or
$g$ is constant.
\end{lemma}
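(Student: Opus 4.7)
First I would dispose of zeros of $f$ and $g$. If $f(z_0)=0$, then $F(z_0)=(0,g(z_0))\in\partial\D$, and by completeness the fiber $\D\cap(\C\times\{g(z_0)\})$ is either empty or an open disk containing $(0,g(z_0))$; the latter is incompatible with $F(z_0)\in\partial\D$, while the former forces $|g(z_0)|=\sup\{|z_2|:(z_1,z_2)\in\Dc\}$. Since $|g|$ is subharmonic and bounded above by this supremum, the maximum principle yields that $g$ is constant. Zeros of $g$ are symmetric, so henceforth assume $f,g$ are nowhere zero.

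The main idea is then to analyze the image $(|f(z)|,|g(z)|)$ inside the Reinhardt diagram $D=\{(|z_1|,|z_2|):(z_1,z_2)\in\D\}\subset[0,\infty)^2$, on whose boundary this image must lie, by combining the harmonicity of $u:=\log|f|$ and $v:=\log|g|$ with the piecewise-smooth decomposition of $\partial\D$. Split $\partial\D$ into smooth Reinhardt pieces $S_j$ and write $U_j=F^{-1}(S_j)$; on each connected component of $U_j$ parameterize the corresponding smooth arc of $\partial D$ locally as $|z_2|=\sigma(|z_1|)$. Then $v=h(u)$ with $h(t):=\log\sigma(e^t)$, and the identity $0=\Delta v=h''(u)|\nabla u|^2$ yields a dichotomy: either $\nabla u\equiv 0$ on the component (whence $|f|$ and then $f$ are constant there, hence on $\disk$ by the identity theorem), or $h''$ vanishes on the range of $u$, forcing $h$ to be affine, $h(t)=at+b$, i.e.\ $\sigma(s)=e^b s^a$.

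The final step invokes geometry: in the convex-Reinhardt context of the paper, the upper-right boundary of $D$ is non-increasing and concave, and $\sigma(s)=e^b s^a$ is both non-increasing and concave only when $a=0$, which forces $\sigma$ constant, $|g|$ constant on the component, and hence $g$ constant on $\disk$. Preimages of corners form a closed lower-dimensional subset, so the $U_j$'s cover a dense open subset of $\disk$ and the identity theorem completes the argument. I anticipate that the main obstacle is precisely this last geometric step of excluding $a\neq 0$: without the concavity of the upper-right boundary of $D$, arcs of the form $t=Cs^a$ with $a<0$ could in principle support non-trivial analytic disks $(f,cf^a)$, so the argument genuinely relies on the additional geometric input provided by the convex-Reinhardt hypothesis of the surrounding theorem.
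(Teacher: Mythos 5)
Your proof is correct in substance but follows a genuinely different route from the paper's. The paper first invokes convexity to reduce to \emph{affine} disks $\xi\mapsto(a\xi+b,c\xi+d)$ (citing \cite[Lemma 2]{CuckovicSahutoglu09} and \cite[Proposition 3.2]{FuStraube98}), then computes $\partial^2/\partial\xi\partial\overline{\xi}$ of $\rho(|a\xi+b|,|c\xi+d|)$ and forces $a=0$ or $c=0$ from the positive semi-definiteness of the Hessian of $\rho$ (convexity) together with $\rho_x,\rho_y\geq 0$ (completeness, via a rectangle argument in the Reinhardt shadow); disks through singular boundary points are then absorbed by the identity principle. You bypass the affine reduction entirely: harmonicity of $u=\log|f|$ and $v=\log|g|$ plus the identity $\Delta(h\circ u)=h''(u)|\nabla u|^2$ forces the profile curve of the shadow to be a power law $y=e^{b}x^{a}$ along the image, and concavity plus monotonicity of that profile (again convexity plus completeness) exclude every exponent except $a=0$. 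The same two geometric inputs are used in the end, but your argument is more self-contained (no external affine-disk lemma, no Hessian computation), and it makes visible exactly why convexity cannot be dropped: disks of the form $(f,cf^{a})$ with $a<0$ do sit in the boundary of non-convex complete Reinhardt domains such as $\{|z_1z_2|<1\}\cap\{|z_1|<1\}\cap\{|z_2|<2\}$, so the lemma as literally stated is false and both proofs silently import the convexity of the ambient theorem --- you are right to flag this. Two small points to tighten: a smooth arc of the outer boundary of the shadow may be a vertical segment rather than a graph $y=\sigma(x)$, in which case you should parameterize over the other axis (and then $|f|$, hence $f$, is constant directly); and the dichotomy ``$\nabla u\equiv 0$ on the component or $h''=0$ on the range of $u$'' needs the observation that the zeros of $\nabla u$, i.e.\ of $f'/f$, are isolated unless $f$ is locally constant, so that $h''(u)=0$ on a dense subset of the component and hence everywhere by continuity. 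Your opening reduction disposing of zeros of $f$ and $g$ via the subharmonic maximum principle is correct and plays the role of the paper's terser treatment of the singular stratum.
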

\begin{proof}
Let $F(z)=(f(z), g(z))$ be an analytic  disk in the boundary. If $|f(z)|$ and
$|g(z)|$ are constant then $F$ is constant. Therefore, there are no nontrivial
disks on the singular part of the boundary. 

Now assume that there is an analytic disk in the boundary away 
from singular points. Then we can assume that the domain is smooth and it is
given by $\rho(|z|,|w|).$ By convexity if there is a disk then it must be an
affine disk (see, for example,  \cite[Lemma 2]{CuckovicSahutoglu09} and
\cite[Proposition 3.2]{FuStraube98}). So there exist $a,b,c,d\in \C$ such that 
the set $\{(a\xi+b,c\xi+d)\in \C^2:\xi\in \disk \}$  is a disk in the boundary. We may
also assume that the disk does not intersect the coordinate axes. In other words, we may
assume that $|a\xi+b|> 0$ and $|c\xi+d|>0.$ Computing the Laplacian of
$r(\xi)=\rho(|a\xi+b|,|c\xi+d|)$ where $\xi\in \disk$ and we get 
\[0=4 \frac{\partial^2 r}{\partial \xi \partial \overline{\xi}}(\xi)
=H_{\rho}(r(\xi);W)+ \rho_x(r(\xi)) \frac{|a|^2}{|a\xi+b|}+\rho_y(r(\xi))
\frac{|c|^2}{|c\xi+d|}\]
where 
\[W=\left(a \left(\frac{\overline{a\xi+b}}{a\xi+b}\right)^{1/2},
c\left(\frac{\overline{c\xi+d}}{c\xi+d }\right)^{1/2}\right)\] 
and $H_{\rho}(p;X)$ is the (real) Hessian of $\rho$ applied to the vector $X$ at
 the point $p.$  Let $(|p|,|q|)$ be a boundary point of 
$Z=\{(x,y)\in \mathbb{R}^2:x\geq 0,y\geq 0, \rho(x,y)<0\}.$ Then the rectangle
$R_{(|p|,|q|)}\subset \mathbb{R}^2$ formed by $(0,0), (|p|,0), (0,|q|),$ and
$(|p|,|q|)$ is inside $Z$ and $(\rho_x(|p|,|q|),\rho_y(|p|,|q|))$ is normal to
the boundary of $Z$ at $(|p|,|q|).$ If $\rho_x(|p|,|q|)<0 $ and
$\rho_y(|p|,|q|)>0$ (or $\rho_x(|p|,|q|)>0 $ and $\rho_y(|p|,|q|)<0$) then the
tangential vector  to $\partial Z$ at $(|p|,|q|)$ has components with the same
sign. Then $R_{(|p|,|q|)}\cap \partial Z$ is nonempty which in turn implies
that  $R_{(|p|,|q|)}\setminus Z$ is nonempty. Similarly, if $\rho_x(|p|,|q|)<0 $
and $\rho_y(|p|,|q|)<0$ then $R_{(|p|,|q|)}$ cannot be contained in $Z.$  Hence, 
$\rho_x\geq 0,\rho_y \geq 0$ and $\rho_y+\rho_y>0,$ and 
$H_{\rho}(r(\xi);W)\geq 0$ for any $W\in \C^2.$ Therefore, either $a=0$ or
$c=0.$ That is, the disk is either horizontal or vertical.  

Assume that   $F(z)=(f(z), g(z))$ is a non-trivial analytic  disk through a
singular point in the boundary. That is, $F$ is nonconstant  and there exists 
$p\in \disk$  such that $F'(p)=0.$ Then by the previous part the smooth part of
the disk is either horizontal or vertical.  If it is horizontal then there
exists an open set $U\subset \disk$ such that $|g|$ is constant on $U.$ The
identity principle implies that $g$ is constant on $\disk.$ Hence, the whole disk
is horizontal. 
\end{proof}

As mentioned earlier, we use slicing of  the domain and the resulting disks to
approach  horizontal or vertical disks in the boundary. The following lemma
will enable us to do that in the sense  that projections of these disks onto
the complex plane approach the projection of the disk in the boundary. 
Even though this lemma is stated for horizontal disks, the result holds
for vertical disks as well.

\begin{lemma}\label{Lem3} 
Let $\D$ be a bounded convex Reinhardt domain in $\C^2$ 
and $\Delta_w= \{z\in \C:(z,w)\in \Dc\}$ for $w\in \C.$ Assume that  
$\emptyset \neq \Delta_{w_0}\times\{w_0\} \subset \partial \D$ for some $w_0\in
\C, \{w_j\}$ is a sequence of complex numbers that converges to
$w_0,$ and $\Delta_{w_j}$ is nonempty for all $j.$ Then
$\lim_{j\to \infty}r_j=r_0$ where $r_j$ denotes the radius of the
disk $\Delta_{w_j}$ for $j=0,1,2,\ldots.$
\end{lemma}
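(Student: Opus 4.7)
My strategy is to encode the convex Reinhardt domain $\D$ by its base $K:=\Dc\cap [0,\infty)^2 \subset \mathbb{R}^2$, a compact convex ``lower set,'' so that the radius of a horizontal disk becomes a single concave function of one real variable, and then to show this function is continuous. Indeed, a bounded convex Reinhardt domain is automatically complete Reinhardt: for each coordinate, the slice $\{z_i:(z_1,\ldots,z_i,\ldots,z_n)\in\D\}$ is a rotation-invariant convex subset of $\C$, hence a disk centered at the origin. Consequently $\Dc$ is determined by $K$, and setting $R:=\max\{t:(0,t)\in K\}$ and $r(t):=\max\{s:(s,t)\in K\}$ for $t\in[0,R]$, the disk $\Delta_w$ has radius exactly $r(|w|)$; in particular $r_j=r(|w_j|)$ and $r_0=r(|w_0|)$.

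Next I would observe that the hypothesis $\Delta_{w_0}\times\{w_0\}\subset \partial\D$ pins $|w_0|$ at $R$: if $|w_0|<R$, then $(0,w_0)$ is interior to $\D$, because one can perturb $|w|$ slightly upward (staying below $R$) and $|z|$ slightly outward (staying below $r(|w|)$), contradicting $(0,w_0)\in\partial\D$. Since $\Delta_{w_j}\neq\emptyset$ forces $(0,w_j)\in\Dc$ and hence $|w_j|\leq R$, the conclusion $r_j\to r_0$ reduces to showing $r(t)\to r(R)$ as $t\to R^-$ along $t=|w_j|$.

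The core step is then to prove that $r:[0,R]\to[0,\infty)$ is continuous, for which I would combine two ingredients. Upper semi-continuity of $r$ follows from compactness of $K$: given $t_n\to t_*$ in $[0,R]$, any convergent subsequence of $(r(t_n),t_n)$ has its limit in $K$, so $\limsup r(t_n)\leq r(t_*)$. Concavity of $r$ follows from convexity of $K$ via the standard inequality $r(\lambda t_1+(1-\lambda)t_2)\geq \lambda r(t_1)+(1-\lambda)r(t_2)$. A concave function is automatically continuous on the interior of its domain; at the endpoint $t_*=R$, the chord lower bound $r(t)\geq (1-t/R)r(0)+(t/R)r(R)$ forces $\liminf_{t\to R^-} r(t)\geq r(R)$, which combined with upper semi-continuity yields continuity at $R$.

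The principal obstacle, as I see it, is precisely this endpoint behavior: a generic concave function on a closed interval may jump downward at an endpoint, and it is the combination of closedness of $\Dc$ (giving upper semi-continuity of $r$) and convexity of $\D$ (giving the chord lower bound) that simultaneously rules this out. The analogue of the lemma for vertical disks then follows by exchanging the roles of the two coordinates.
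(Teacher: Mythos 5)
Your proof is correct and in substance follows the same route as the paper's: completeness of the bounded convex Reinhardt domain reduces everything to radii of origin-centered disks, convexity supplies the lower bound $\liminf_{j\to\infty} r_j \geq r_0$, and closedness/compactness of $\Dc$ supplies the upper semicontinuity giving $\limsup_{j\to\infty} r_j \leq r_0$. Your repackaging via the concave profile function $r(t)$ on the base $K\subset[0,\infty)^2$ is a clean way to make explicit the step the paper dispatches with ``since the domain is also convex one can show that $r_j\geq r_0$,'' but it is not a genuinely different argument.
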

\begin{proof}
Since $\D$ is a convex Reinhardt domain it is also complete. Hence,  all of these disks
are centered at the origin and  we want to prove that $\{r_j\}$ converges to $r_0,$ the
radius of  $\Delta_{w_0}.$ In addition, since the domain is also convex one can show
that  $r_j\geq r_0$ for $j\geq 1.$ Hence $\liminf_{j\to\infty}r_j \geq r_0$

On the other hand, if $\limsup_{j\to \infty}r_j>r_0$ we can choose $p_{k} \in
\Delta_{w_{j_k}}$ such that $|p_k|=r_{j_k}$ and $\lim_{k\to
\infty}|p_{k}|=\limsup_{j\to \infty}r_j.$ Then the sequence
$\{(p_k,w_{j_k})\}\subset \partial \D$ has a subsequence that converges to a
point $(p,w_0)\in \partial \D.$ This means that  $p\in \Delta_{w_0}$ and  
\[\limsup_{j\to\infty}r_j =\lim_{k\to \infty}|p_k|=|p|\leq r_0.\] 
Therefore, $\lim_{j\to\infty}r_j = r_0.$
\end{proof}

The convergence of the disk in Lemma \ref{Lem3} brings the natural question of
a convergence of the corresponding Bergman kernels and projections.  
Let $K$ be a set in $\C^n$ and  $T_K$ denote the characteristic function of 
$K.$ That is, $T_K(z)=1$ if $z\in K$ and $T_K(z)=0$ otherwise.
Also for a function $f$ defined on a set $U$  we let $E_Uf$ denote
the extension of $f$ by 0 outside $U.$ 

\begin{lemma} \label{Lem4}
Let $\psi \in L^2(\C).$   Then
$\lim_{r\to 1}\|E_{\mathbb{D}_r}P^{\mathbb{D}_r}\psi 
-E_{\mathbb{D}}P^{\mathbb{D}}\psi\|_{L^2(\C)} = 0.$
\end{lemma}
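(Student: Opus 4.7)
The plan is to pull $P^{\mathbb{D}_r}$ back to $P^{\mathbb{D}}$ via the biholomorphism $\zeta\mapsto r\zeta$ from $\mathbb{D}$ to $\mathbb{D}_r$, and to deduce the claim from strong $L^2$-continuity of the dilation $(T_r\psi)(\zeta):=r\psi(r\zeta)$ on $\C$. The operator $T_r$ is unitary on $L^2(\C)$, and $T_r\psi\to\psi$ in $L^2(\C)$ as $r\to 1$ by a standard density argument (clear for $\psi\in C_c(\C)$ by dominated convergence, then extended by uniform boundedness).

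The Bergman-projection transformation rule under the biholomorphism above yields
\[
P^{\mathbb{D}_r}\psi(w)=\frac{1}{r}\,P^{\mathbb{D}}\bigl((T_r\psi)|_{\mathbb{D}}\bigr)(w/r),\qquad w\in\mathbb{D}_r.
\]
Changing variables via $w=r\zeta$ gives at once $\|P^{\mathbb{D}_r}\psi\|_{L^2(\mathbb{D}_r)}=\|P^{\mathbb{D}}((T_r\psi)|_{\mathbb{D}})\|_{L^2(\mathbb{D})}$. The same substitution, carried out after first restricting the integration to $z\in\mathbb{D}$, gives when $r>1$ the identity $\|P^{\mathbb{D}_r}\psi\|_{L^2(\mathbb{D})}=\|P^{\mathbb{D}}((T_r\psi)|_{\mathbb{D}})\|_{L^2(\mathbb{D}_{1/r})}$. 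Since $(T_r\psi)|_{\mathbb{D}}\to\psi|_{\mathbb{D}}$ in $L^2(\mathbb{D})$ and $P^{\mathbb{D}}$ is continuous, both norms converge to $\|P^{\mathbb{D}}\psi\|_{L^2(\mathbb{D})}$ as $r\to 1$.

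I would then decompose
\[
\|E_{\mathbb{D}_r}P^{\mathbb{D}_r}\psi-E_{\mathbb{D}}P^{\mathbb{D}}\psi\|^2_{L^2(\C)}=\int_{\mathbb{D}\cap\mathbb{D}_r}|P^{\mathbb{D}_r}\psi-P^{\mathbb{D}}\psi|^2\,dV+\mathcal{S}_r,
\]
where $\mathcal{S}_r$ gathers the contribution on the symmetric difference $\mathbb{D}\triangle\mathbb{D}_r$. Exactly one summand of $\mathcal{S}_r$ is nonzero: for $r<1$ it is $\int_{\mathbb{D}\setminus\mathbb{D}_r}|P^{\mathbb{D}}\psi|^2\,dV$, which vanishes by absolute continuity of the integral; for $r>1$ it equals $\|P^{\mathbb{D}_r}\psi\|^2_{L^2(\mathbb{D}_r)}-\|P^{\mathbb{D}_r}\psi\|^2_{L^2(\mathbb{D})}$, which vanishes by the two norm convergences just established. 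On $\mathbb{D}\cap\mathbb{D}_r$, expanding $|P^{\mathbb{D}_r}\psi-P^{\mathbb{D}}\psi|^2$ into three pieces, the two squared norms each converge to $\|P^{\mathbb{D}}\psi\|^2_{L^2(\mathbb{D})}$, and the cross-term $\int_{\mathbb{D}\cap\mathbb{D}_r}P^{\mathbb{D}_r}\psi\,\overline{P^{\mathbb{D}}\psi}\,dV$ simplifies, via self-adjointness of whichever projection is defined on the smaller set together with the fact that the other factor restricts there to a holomorphic function, to an integral against $\psi$ (explicitly $\int_{\mathbb{D}_r}\psi\,\overline{P^{\mathbb{D}}\psi}\,dV$ when $r<1$, and $\langle P^{\mathbb{D}}((T_r\psi)|_{\mathbb{D}}),T_r\psi\rangle_{L^2(\mathbb{D}_{1/r})}$ when $r>1$). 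Both also tend to $\|P^{\mathbb{D}}\psi\|^2_{L^2(\mathbb{D})}$, so the three pieces combine with signs $1-2+1=0$ in the limit.

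The main delicate point is handling $r<1$ and $r>1$ uniformly: the case $r>1$ relies on the secondary change of variables onto $\mathbb{D}_{1/r}$ to express $\|P^{\mathbb{D}_r}\psi\|^2_{L^2(\mathbb{D})}$ in terms of the strongly continuous family $T_r\psi$, whereas the case $r<1$ is handled directly by absolute continuity of the integral for $|P^{\mathbb{D}}\psi|^2$.
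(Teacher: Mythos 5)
Your argument is correct, but it takes a genuinely different route from the paper's. The paper splits $\psi$ into the piece $T_{\mathbb{D}_{1-\delta}}\psi$ supported well inside the disk and a remainder supported near the unit circle: the remainder contributes at most $\varepsilon$ by contractivity of the Bergman projections, and for the inner piece the difference of projections is controlled by the uniform convergence, on $\overline{\mathbb{D}_{r_0}\times\mathbb{D}_{1-\delta}}$, of the explicit kernel difference $F^r-F^1$. You instead conjugate $P^{\mathbb{D}_r}$ back to $P^{\mathbb{D}}$ via the dilation $\zeta\mapsto r\zeta$, reduce everything to the strong $L^2$-continuity of the unitary dilations $T_r$, and close with a Radon--Riesz-type expansion: the two squared norms and the cross term each tend to $\|P^{\mathbb{D}}\psi\|^2_{L^2(\mathbb{D})}$, so the square of the difference tends to zero, while the symmetric-difference contribution is handled by absolute continuity (for $r<1$) and by the convergence of the two norms $\|P^{\mathbb{D}_r}\psi\|_{L^2(\mathbb{D}_r)}$ and $\|P^{\mathbb{D}_r}\psi\|_{L^2(\mathbb{D})}$ (for $r>1$). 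Both proofs exploit the special structure of concentric disks --- the paper through the closed-form kernel, you through the scaling biholomorphism --- so neither is more general in substance; the paper's cut-off-plus-locally-uniform-kernel-convergence scheme is the one that transfers most readily to other exhaustions where kernels converge locally uniformly, whereas yours is shorter, avoids kernel estimates entirely, and makes the unitarity of the rescaling do the work. One step worth writing out: the passage from $L^2(\mathbb{D})$ to $L^2(\mathbb{D}_{1/r})$ in both the norm identity and the cross term for $r>1$ requires the observation that for an $L^2(\mathbb{D})$-convergent family $g_r\to g$ one has $\|g_r\|_{L^2(\mathbb{D}\setminus\mathbb{D}_{1/r})}\leq \|g_r-g\|_{L^2(\mathbb{D})}+\|g\|_{L^2(\mathbb{D}\setminus\mathbb{D}_{1/r})}\to 0$; this is routine and not a gap.
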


\begin{proof}
Since $\psi$ is  square integrable, for every $\varepsilon>0$ there exists
$\delta>0$ such that $|r-1|<\delta$ implies that
$\|\psi\|_{L^2(\mathbb{D}_{1+\delta}\setminus\mathbb{D}_{1-\delta})}<\varepsilon/2.$ Then
\[\|P^{\mathbb{D}_r}(T_{\mathbb{D}_r\setminus\mathbb{D}_{1-\delta}}\psi)\|_{
L^2(\mathbb{D}_r)}+
\|P^{\mathbb{D}}(T_{\mathbb{D}\setminus\mathbb{D}_{1-\delta}}\psi)\|_{
L^2(\mathbb{D}) } \leq
2\|\psi\|_{L^2(\mathbb{D}_{1+\delta}\setminus\mathbb{D}_{1-\delta})}\leq\varepsilon\]
for $|r-1|<\delta.$ Next the proof of the lemma will be completed by showing that  
\[\|E_{\mathbb{D}_r}P^{\mathbb{D}_r}(T_{\mathbb{D}_{1-\delta}}\psi)- 
E_{\mathbb{D}}P^{\mathbb{D}}(T_{\mathbb{D}_{1-\delta}}\psi)\|_{L^2(\C)}\to
0  \text{ as } r\to 1.\]
We define $G_r(z,w)=F^r(z,w)-F^1(z,w)$ for
$(z,w)\in \C\times \mathbb{D}_{1-\delta},$ where 
\[F^r(z,w)=\frac{T_{\mathbb{D}_r}(z)r^2}{(r^2-z\overline{w})^2}\]
and  $r>1-\delta.$ We note that  $\frac{r^2}{\pi(r^2-z\overline{w})^2}
$ is the Bergman kernel for $\mathbb{D}_r.$ Then there exists $r_0>1$ such that
$G_r\to 0$ uniformly on 
 $\overline{\mathbb{D}_{r_0}\times \mathbb{D}_{1-\delta}}$ as $r\to 1.$ For
$1-\delta<r<r_0$ we have 
\begin{align*}
\|E_{\mathbb{D}_r}P^{\mathbb{D}_r}(T_{\mathbb{D}_{1-\delta}}\psi)
-&E_{\mathbb{ D}}P^{
\mathbb{D}} (T_{\mathbb{D}_{1-\delta}}\psi)\|_{L^2(\C)}^2 \\
=& \int_{\C}\left|
\int_{\mathbb{D}_{1-\delta}} F^r(z,w)\psi(w)dV(w)
-\int_{\mathbb{D}_{1-\delta}} F^1(z,w)\psi(w)dV(w) \right|^2dV(z)\\
\leq & \int_{\C}
\left(\int_{\mathbb{D}_{1-\delta}} |G_r(z,w)||\psi(w)|dV(w)\right)^2dV(z)\\
\leq &\|\psi\|^2_{L^2(\mathbb{D})}\int_{\mathbb{D}_{r_0}}
\int_{\mathbb{D}_{1-\delta}} |G_r(z,w)|^2 dV(w)dV(z).\\
\end{align*}
Since $G_r\to 0$ uniformly as $r\to 1$ we have  
$\|E_{\mathbb{D}_r}P^{\mathbb{D}_r}(T_{\mathbb{D}_{1-\delta}}\psi)
-E_{\mathbb{D}}P^{\mathbb{D}}(T_{\mathbb{D}_{1-\delta}}\psi)\|_{L^2(\C)}\to
0$ as $r\to 1.$ 
\end{proof}

The lemma above and \cite[Lemma 1.4.1]{KrantzBook} imply the following corollary.
\begin{corollary}\label{Cor1}
 Let $\psi \in L^2(\C)$ and $K$ be a compact subset of $\mathbb{D}.$ 
Then $\{P^{\mathbb{D}_r}\psi\}$ converges uniformly to
$P^{\mathbb{D}}\psi$ on $K$ as $r\to 1.$
\end{corollary}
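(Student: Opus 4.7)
The plan is to combine the $L^2(\C)$ convergence from Lemma \ref{Lem4} with standard interior regularity for holomorphic functions. The content of \cite[Lemma 1.4.1]{KrantzBook} is that, for open sets $U \Subset V \subset \C$, any $h \in A^2(V)$ satisfies $\sup_{U}|h| \leq C_{U,V}\|h\|_{L^2(V)}$ with a constant independent of $h$. Since $P^{\mathbb{D}_r}\psi$ and $P^{\mathbb{D}}\psi$ are holomorphic where defined, this is precisely the tool for upgrading $L^2$ convergence to uniform convergence on compacta.

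First I would fix a compact $K \subset \mathbb{D}$ and choose $\rho \in (0,1)$ with $K \subset \mathbb{D}_\rho$ and $\overline{\mathbb{D}_\rho} \subset \mathbb{D}$. Selecting any open $U$ with $K \subset U \Subset \mathbb{D}_\rho$ then fixes a constant $C = C_{U,\mathbb{D}_\rho}$ from \cite[Lemma 1.4.1]{KrantzBook}. For every $r$ with $\rho < r$ close enough to $1$ we have $\mathbb{D}_\rho \subset \mathbb{D}_r \cap \mathbb{D}$, so on $\mathbb{D}_\rho$ the extensions by zero coincide with the projections themselves. In particular,
\[
\|P^{\mathbb{D}_r}\psi - P^{\mathbb{D}}\psi\|_{L^2(\mathbb{D}_\rho)} \leq \|E_{\mathbb{D}_r}P^{\mathbb{D}_r}\psi - E_{\mathbb{D}}P^{\mathbb{D}}\psi\|_{L^2(\C)},
\]
and the right-hand side tends to $0$ as $r \to 1$ by Lemma \ref{Lem4}.

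Finally, I would apply the interior estimate to $h_r := P^{\mathbb{D}_r}\psi - P^{\mathbb{D}}\psi \in A^2(\mathbb{D}_\rho)$ to conclude
\[
\sup_{K}|h_r| \leq C\,\|h_r\|_{L^2(\mathbb{D}_\rho)} \longrightarrow 0,
\]
which is the desired uniform convergence. There is no serious obstacle; the one piece of care required is the bookkeeping that places $K$ inside $\mathbb{D}_r \cap \mathbb{D}$ for all $r$ near $1$, so that the two projections can be compared on the common subdomain $\mathbb{D}_\rho$ before invoking the interior estimate.
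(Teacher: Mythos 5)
Your argument is correct and is exactly the route the paper intends: the paper gives no written proof of the corollary beyond the remark that it follows from Lemma \ref{Lem4} together with the interior sup-over-$L^2$ estimate of \cite[Lemma 1.4.1]{KrantzBook}, which is precisely the combination you carry out. The bookkeeping you supply (placing $K\subset \mathbb{D}_\rho\subset \mathbb{D}_r\cap\mathbb{D}$ for $r$ near $1$ so the extensions by zero agree with the projections there) is the right way to fill in the details the authors left implicit.
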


The following lemma  is stated for bounded convex domains because these domains
are the focus of our paper. However, similar ideas can be used for $A^p$ spaces on
starlike domains. This has been done for $A^p(\mathbb{D})$ in 
\cite[Theorem 3, p.30]{DurenSchusterBook}. 

\begin{lemma}\label{Lem5}
 Let $U$ be a bounded convex domain in $\C$ and $f\in A^2(U).$ Then for any
$\varepsilon>0$ there exists a holomorphic polynomial $h$ such that
$\|f-h\|_{L^2(U)}<\varepsilon.$ 
\end{lemma}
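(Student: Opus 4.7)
The plan is to approximate $f$ in two stages: first by a dilate $f_r$ that extends holomorphically across $\partial U$, then apply Runge's theorem to approximate $f_r$ by polynomials uniformly on $\overline{U}$.

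After translating, I assume $0\in U$. Convexity gives $r\overline{U}\subset U$ for every $r\in (0,1)$: choose $\delta>0$ with $B(0,\delta)\subset U$; for $z\in U$ the convex hull of $\{z\}$ and $B(0,\delta)$ lies in $U$, so $B(rz,(1-r)\delta)\subset U$, and letting $z_n\to z$ with $z_n\in U$ extends this to $z\in\overline{U}$. Define $f_r(z)=f(rz)$, which is holomorphic on $(1/r)U\supset\overline{U}$ and hence bounded on $\overline{U}$.

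The key step is to show $\|f_r-f\|_{L^2(U)}\to 0$ as $r\to 1^-$. A change of variables yields $\|f_r\|_{L^2(U)}^2 = r^{-2}\int_{rU}|f|^2\,dA$; since $rU\nearrow U$ as $r\nearrow 1$, monotone convergence gives $\|f_r\|_{L^2(U)}\to\|f\|_{L^2(U)}$. Independently, $\{f_r\}$ is bounded in $L^2(U)$ and, for any $g\in C_c(U)$ with support $K\Subset U$, uniform continuity of $f$ on a fixed compact neighborhood of $K$ implies $f_r\to f$ uniformly on $K$, so $\langle f_r,g\rangle\to\langle f,g\rangle$. Density of $C_c(U)$ in $L^2(U)$ together with the uniform bound then yields weak convergence $f_r\rightharpoonup f$. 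In a Hilbert space, weak convergence plus convergence of norms gives strong convergence, since $\|f_r-f\|^2=\|f_r\|^2-2\operatorname{Re}\langle f_r,f\rangle+\|f\|^2\to 0$.

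Finally, $\overline{U}$ is a compact convex subset of $\C$, so its complement is connected and Runge's theorem applies: each $f_r$, being holomorphic on an open neighborhood of $\overline{U}$, can be uniformly approximated on $\overline{U}$ by holomorphic polynomials. Given $\varepsilon>0$, pick $r<1$ with $\|f-f_r\|_{L^2(U)}<\varepsilon/2$, then a polynomial $h$ with $\sup_{\overline{U}}|f_r-h|<\varepsilon/(2\sqrt{|U|})$; the triangle inequality gives $\|f-h\|_{L^2(U)}<\varepsilon$. The main obstacle is the $L^2$ convergence $f_r\to f$: pointwise convergence does not by itself imply $L^2$ convergence, and a direct dominated-convergence argument lacks an obvious dominating function, which is why the weak-plus-norm route (equivalently, the Brezis--Lieb lemma) is the cleanest way to proceed.
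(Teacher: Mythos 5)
Your proof is correct, and its overall skeleton matches the paper's: approximate $f$ by the dilate $f_r(z)=f(rz)$, then approximate $f_r$ uniformly on $\overline{U}$ by a polynomial and convert to an $L^2$ bound via the volume of $U$. The two places where you diverge are both in the execution. For the key step $\|f_r-f\|_{L^2(U)}\to 0$, the paper argues by hand: it splits $U$ into an inner dilate $\left(\tfrac{1+\delta}{2}\right)U$, where uniform continuity of $f$ on compacts gives uniform closeness of $f$ and $f_r$, and an outer collar, where both $\|f\|$ and $\|f_r\|$ are controlled by a small tail $\|f\|_{L^2(U\setminus \delta U)}$ (with the change-of-variables factor $1/r$). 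You instead prove norm convergence $\|f_r\|\to\|f\|$ by monotone convergence on the increasing sets $rU$, weak convergence $f_r\rightharpoonup f$ by testing against $C_c(U)$, and conclude strong convergence from the Hilbert-space identity $\|f_r-f\|^2=\|f_r\|^2-2\,\text{Re}\langle f_r,f\rangle+\|f\|^2$; this is the Radon--Riesz route and it is airtight, arguably cleaner, though less elementary in flavor than the paper's direct estimate. For the polynomial approximation, the paper invokes Mergelyan's theorem for $f_r\in A^2(U)\cap C(\overline U)$, whereas you observe that $f_r$ is actually holomorphic on the open neighborhood $(1/r)U\supset\overline{U}$ and that $\C\setminus\overline{U}$ is connected by convexity, so Runge's theorem suffices; your choice uses strictly lighter machinery and is the sharper observation here. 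Both proofs are complete; yours trades the paper's explicit three-piece estimate for soft functional analysis and replaces Mergelyan by Runge.
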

\begin{proof} 
Without loss of generality we may assume that  $U$ contains the
origin.  Let us define $f_{r}(z)=f(r z)$ for $r\in (0,1)$ and assume
that $\varepsilon>0$ is given. Then $f_{r}\in A^2(U)\cap C(\overline{U})$ and
one can show that  there exists $0<r<1$ such that
\[\|f-f_{r_0}\|<\frac{\varepsilon}{2}.\] 
This can be seen as follows: First there exists  $0<\delta<1$ so that
$ \|f\|_{L^2(U\setminus \delta U)}<\frac{\varepsilon}{6}.$ The uniform
continuity of $f$ on compact subsets of $U$ implies that there exists $\frac{1}{2}<r<1$
such that 
\[\sup\left\{|f(z)-f(rz)|:z\in\left(\frac{1+\delta}{2}\right)\overline{U}\right\}
<\frac{\varepsilon}{6\sqrt{V(U)}}\]
where $V(U)$ denotes the volume of $U.$  Then we have  
\begin{align*}
\|f-f_{r}\|\leq &
\|f-f_{r}\|_{L^{2}((\frac{1+\delta}{2}) U)}+
\|f\|_{L^2(U\setminus
(\frac{1+\delta}{2})U)} +\|f_{r}\|_{L^{2}(U\setminus
(\frac{1+\delta}{2})U)} \\
\leq &\frac{\varepsilon}{6}+\frac{\varepsilon}{6}+ \frac{1}{r}
\|f\|_{L^2(U\setminus \delta U)} \\
<&\frac{2\varepsilon}{3}
\end{align*}
On the other hand,  Mergelyan's theorem implies that  there exists a
holomorphic polynomial $h$ such that 
\[\sup\{|f_{r}(z)-h(z)|:z\in \overline{U}\} <
\frac{\varepsilon}{3\sqrt{V(U)}}.\] 
 Then we have 
\[\|f-h\|\leq \|f-f_{r}\|+\|f_{r}-h\|\leq \frac{2\varepsilon}{3}+
\frac{\varepsilon}{3}=\varepsilon.\]
This completes the proof of Lemma \ref{Lem5}.
\end{proof}

The next lemma shows that when concentric disks converge, then not
only the kernels and the Bergman projections converge but also the products of
Hankel operators converge ``weakly". 

\begin{lemma}\label{Lem6} 
For $r>0$ let  $\mathbb{D}_{r}=\{z\in \C:|z|<r\}, f_1$ and $f_2$ be entire
functions, and $\phi,\psi\in C(\C).$  Then 
\[\ds \lim_{r\to r_0} \left\langle H^{\mathbb{D}_{r}}_{\phi}(f_{1}),
H^{\mathbb{D}_{r}}_{\psi} (f_{2})\right\rangle_{\mathbb{D}_{r}} =
\left\langle
H^{\mathbb{D}_{r_0}}_{\phi}(f_{1}),H^{\mathbb{D}_{r_0}}_{\psi}
(f_{2})\right\rangle_{\mathbb{D}_{r_0}}.\] 
\end{lemma}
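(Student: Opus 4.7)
The plan is to unwind the inner product using orthogonality, reduce to two integrals, and handle the first by continuity of the integrand and the second by the Bergman-projection convergence captured in Lemma \ref{Lem4}.

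First, I would use the identity from Lemma \ref{Lem1} to rewrite the inner product, then expand the remaining Hankel operator:
\[
\langle H^{\mathbb{D}_r}_\phi f_1, H^{\mathbb{D}_r}_\psi f_2\rangle_{\mathbb{D}_r}
= \langle \overline{\psi}\,H^{\mathbb{D}_r}_\phi f_1, f_2\rangle_{\mathbb{D}_r}
= \int_{\mathbb{D}_r}\overline{\psi}\phi f_1\overline{f_2}\,dV
  - \int_{\mathbb{D}_r}\overline{\psi}\,P^{\mathbb{D}_r}(\phi f_1)\,\overline{f_2}\,dV.
\]
The first integral is a continuous function of $r$: its integrand $\overline{\psi}\phi f_1\overline{f_2}$ is fixed and continuous on $\C$, hence bounded on any compact neighborhood of $\overline{\mathbb{D}_{r_0}}$, so dominated convergence yields $\int_{\mathbb{D}_{r_0}}\overline{\psi}\phi f_1\overline{f_2}\,dV$ as $r\to r_0$.

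For the second integral I would appeal to Lemma \ref{Lem4} in its dilated form for $r\to r_0$, which follows from the stated version either by the change of variables $z\mapsto z/r_0$ or by rerunning the proof verbatim with $\mathbb{D}$ replaced by $\mathbb{D}_{r_0}$. Fixing any $R>r_0$, the truncated symbol $\Psi:=\phi f_1\cdot T_{\mathbb{D}_R}$ lies in $L^2(\C)$ because $\phi$ and $f_1$ are continuous on the compact set $\overline{\mathbb{D}_R}$, and for every $r<R$ one has $P^{\mathbb{D}_r}(\phi f_1)=P^{\mathbb{D}_r}\Psi$. The dilated Lemma \ref{Lem4} then gives
\[
\lim_{r\to r_0}\left\|E_{\mathbb{D}_r}P^{\mathbb{D}_r}(\phi f_1)-E_{\mathbb{D}_{r_0}}P^{\mathbb{D}_{r_0}}(\phi f_1)\right\|_{L^2(\C)}=0,
\]
and pairing this difference against the fixed $L^2(\C)$ function $\overline{\psi}\,\overline{f_2}\,T_{\mathbb{D}_R}$ via Cauchy--Schwarz controls the second integral.

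The main obstacle is essentially bookkeeping: Lemma \ref{Lem4} is phrased only for $r\to 1$, so I must invoke its obvious generalization to $r\to r_0$, and because $\phi,\psi$ are only assumed continuous on $\C$ (hence merely locally $L^\infty$), one first has to truncate to $\mathbb{D}_R$ in order to produce a genuine $L^2(\C)$ symbol to which Lemma \ref{Lem4} applies. Once these points are handled, combining the two limits produces the identity stated in the lemma.
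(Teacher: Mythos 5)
Your proof is correct, and while it rests on the same two pillars as the paper's argument (expanding the Hankel inner product into a multiplication term plus a Bergman-projection term, and then invoking convergence of the projections $P^{\mathbb{D}_r}$ as $r\to r_0$), your treatment of the projection term is genuinely different and more economical. The paper estimates the difference of the two inner products by splitting $\mathbb{D}_r$ into a fixed compact disk $\mathbb{D}_{r_0-\delta}$, where it applies Corollary \ref{Cor1} (locally uniform convergence of the projections), plus thin annuli near the boundary whose contribution is controlled by absolute continuity of the $L^2$ norm; this forces a cascade of choices of $\delta$'s and a separate treatment of the cases $r\geq r_0$ and $r\leq r_0$. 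You instead truncate the symbol to $\Psi=\phi f_1\,T_{\mathbb{D}_R}\in L^2(\C)$ and apply the dilated form of Lemma \ref{Lem4} directly, pairing $E_{\mathbb{D}_r}P^{\mathbb{D}_r}\Psi-E_{\mathbb{D}_{r_0}}P^{\mathbb{D}_{r_0}}\Psi$ against the fixed function $\psi f_2\,T_{\mathbb{D}_R}$ via Cauchy--Schwarz; since the extensions by zero are compared globally in $L^2(\C)$, no annular bookkeeping and no case distinction are needed. The two points you flag --- the rescaling of Lemma \ref{Lem4} from $r\to 1$ to $r\to r_0$ (immediate from the dilation covariance of the Bergman kernel, or from rerunning the proof with the explicit kernel $r^2/(\pi(r^2-z\overline{w})^2)$) and the truncation required because $\phi f_1$ is only locally square integrable --- are exactly the right ones, and both are easily discharged. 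It is worth noting that Corollary \ref{Cor1} is itself deduced from Lemma \ref{Lem4}, so your route uses the more primitive ingredient more directly and yields a shorter proof.
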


\begin{proof}
First assume that $r_0\leq r.$ For any $0<\delta<r_0$ we have
\begin{align*}
 \left|\left\langle
H^{\mathbb{D}_{r}}_{\phi}(f_{1})\right.\right.&\left.\left.\!\!\!, 
H^{\mathbb{D}_{r}}_{\psi}(f_{2})\right\rangle_{\mathbb{D}_{r}} - 
\left\langle H^{\mathbb{D}_{r_0}}_{\phi}(f_{1}),
H^{\mathbb{D}_{r_0}}_{\psi}(f_{2 }) \right\rangle_{\mathbb{D}_{r_0}}
\right| \\
=&
\left|\left\langle \phi f_{1},H^{\mathbb{D}_{r}}_{\psi}(f_{2})
\right\rangle_{\mathbb{D}_{r}} -
\left\langle \phi f_{1},H^{\mathbb{D}_{r_0}}_{\psi}(f_{2
}) \right\rangle_{\mathbb{D}_{r_0}} \right|\\
\leq & \left|\left\langle \phi f_{1},\psi f_{2}
\right\rangle_{\mathbb{D}_{r}\setminus \mathbb{D}_{r_0}} \right| 
+\left|\left\langle \phi f_{1},P^{\mathbb{D}_{r}}(\psi f_{2})
\right\rangle_{\mathbb{D}_{r}} -
\left\langle \phi f_{1},P^{\mathbb{D}_{r_0}}(\psi f_{2
}) \right\rangle_{\mathbb{D}_{r_0}} \right|\\
\leq&  \left|\left\langle \phi f_{1},\psi f_{2}
\right\rangle_{\mathbb{D}_{r}\setminus \mathbb{D}_{r_0}} \right| +
 \left|\left\langle \phi f_{1},P^{\mathbb{D}_{r}}(\psi f_{2})
\right\rangle_{\mathbb{D}_{r_0-\delta}} - \left\langle \phi
f_{1},P^{\mathbb{D}_{r_0}}(\psi f_{2})
\right\rangle_{\mathbb{D}_{r_0-\delta}} \right|\\
&+ \left|\left\langle \phi f_{1},P^{\mathbb{D}_{r}}(\psi f_{2})
\right\rangle_{\mathbb{D}_{r }\setminus \mathbb{D}_{r_0-\delta}} \right| 
+\left|\left\langle \phi f_{1},P^{\mathbb{D}_{r_0}}(\psi f_{2})
\right\rangle_{\mathbb{D}_{r_0}\setminus \mathbb{D}_{r_0-\delta}} \right| 
\end{align*}
Therefore, we have
\begin{align}\nonumber
 \left|\left\langle
H^{\mathbb{D}_{r}}_{\phi}(f_{1})\right.\right.&\left.\left.\!\!\!, 
H^{\mathbb{D}_{r}}_{\psi}(f_{2})\right\rangle_{\mathbb{D}_{r}} - 
\left\langle H^{\mathbb{D}_{r_0}}_{\phi}(f_{1}),
H^{\mathbb{D}_{r_0}}_{\psi}(f_{2 }) \right\rangle_{\mathbb{D}_{r_0}}
\right|\\ \label{Eqn2}
\leq& \left|\left\langle \phi f_{1},\psi f_{2}
\right\rangle_{\mathbb{D}_{r}\setminus \mathbb{D}_{r_0}} \right|
+ \left|\left\langle \phi f_{1},P^{\mathbb{D}_{r}}(\psi f_{2})
 -  P^{\mathbb{D}_{r_0}}(\psi f_{2})
\right\rangle_{\mathbb{D}_{r_0-\delta}} \right|\\
\nonumber &+\|\phi f_1\|_{L^2(\mathbb{D}_{r}\setminus
\mathbb{D}_{r_0-\delta})}\|\psi f_2\|_{L^2(\mathbb{D}_{r})} 
+\|\phi f_1\|_{L^2(\mathbb{D}_{r_0}\setminus
\mathbb{D}_{r_0-\delta})}\|\psi f_2\|_{L^2(\mathbb{D}_{r_0})}.
\end{align}
Then for $\varepsilon>0$ one can choose $0<\delta_1<\min\{1,r_0\}$ so that 
$\|\phi f_1\|_{L^2(\mathbb{D}_{r_0+\delta_1}\setminus
\mathbb{D}_{r_0-\delta_1})}\leq \varepsilon. $ Furthermore, by Corollary
\ref{Cor1} we can choose $0<\delta_2<\delta_1$ so that 
$r_0\leq r\leq r_0+\delta_2$ implies that 
\[\left|P^{\mathbb{D}_{r}}(\psi f_{2})(z) -  P^{\mathbb{D}_{r_0}}(\psi
f_{2})(z)  \right|\leq \varepsilon \text{ for } z\in
\overline{\mathbb{D}_{r_0-\delta_1}}
\text{ and } \left|\left\langle \phi f_{1},\psi f_{2} 
\right\rangle_{\mathbb{D}_{r}\setminus \mathbb{D}_{r_0}} \right|\leq
\varepsilon.\]
Therefore, for $r_0\leq r\leq r_0+\delta_2$ we have 
\begin{align*}
 \left|\left\langle
H^{\mathbb{D}_{r}}_{\phi}(f_{1}),H^{\mathbb{D}_{r}}_{\psi}(f_{2})
\right\rangle_{\mathbb{D}_{r}} - 
\left\langle H^{\mathbb{D}_{r_0}}_{\phi}(f_{1}),
H^{\mathbb{D}_{r_0}}_{\psi}(f_{2 }) \right\rangle_{\mathbb{D}_{r_0}}
\right|
\leq & 
 \varepsilon \left(1+\|\psi f_2\|_{L^2(\mathbb{D}_{r_0})}
+ \|\psi f_2\|_{L^2(\mathbb{D}_{r})}\right) \\
&+ \left|\left\langle \phi f_{1},P^{\mathbb{D}_{r}}(\psi f_{2})
 -  P^{\mathbb{D}_{r_0}}(\psi f_{2})
\right\rangle_{\mathbb{D}_{r_0-\delta_1}} \right|\\
\leq&  \varepsilon \left(1+\|\psi f_2\|_{L^2(\mathbb{D}_{r_0})}
+ \|\psi f_2\|_{L^2(\mathbb{D}_{r_0+1})}\right)\\
&+ \varepsilon r_0\sqrt{\pi}\|\phi f_{1}\|_{L^2(\mathbb{D}_{r_0})}.
\end{align*}
We note that by the Cauchy-Schwarz inequality we used the following
inequality above
\[\left|\left\langle \phi f_{1},P^{\mathbb{D}_{r}}(\psi f_{2})
 -  P^{\mathbb{D}_{r_0}}(\psi f_{2})
\right\rangle_{\mathbb{D}_{r_0-\delta}} \right| \leq  \varepsilon
r_0\sqrt{\pi}\|\phi f_{1}\|_{L^2(\mathbb{D}_{r_0})}.\]
Therefore, there exists a constant $K>0$ independent of $r$ and $\varepsilon$ so
that 
\[\left|\left\langle
H^{\mathbb{D}_{r}}_{\phi}(f_{1}),H^{\mathbb{D}_{r}}_{\psi}(f_{2})
\right\rangle_{\mathbb{D}_{r}} - 
\left\langle H^{\mathbb{D}_{r_0}}_{\phi}(f_{1}),
H^{\mathbb{D}_{r_0}}_{\psi}(f_{2 }) \right\rangle_{\mathbb{D}_{r_0}}
\right| \leq \varepsilon K_1\]
 for $r_0\leq r\leq r_0+\delta_2.$

Similarly if $r\leq r_0$  equation \eqref{Eqn2} is valid for $r$ and $r_0$
interchanged. For $\varepsilon >0$ we choose  $0<\delta_3<\min\{1,r_0/2\}$
such that $\|\phi f_1\|_{L^2(\mathbb{D}_{r_0}\setminus
\mathbb{D}_{r_0-\delta_3})}<\varepsilon.$ By Corollary \ref{Cor1} we
choose $0<\delta_4<\frac{\delta_3}{2}$ so that  so that $r_0-\delta_4<r\leq r_0$
implies that 
\[\left|P^{\mathbb{D}_{r}}(\psi f_{2})(z) -  P^{\mathbb{D}_{r_0}}(\psi
f_{2})(z)  \right|\leq \varepsilon \text{ for } z\in
\overline{\mathbb{D}_{r_0-\frac{\delta_3}{2}}}
\text{ and } \left|\left\langle \phi f_{1},\psi f_{2} 
\right\rangle_{\mathbb{D}_{r_0}\setminus \mathbb{D}_{r}} \right|\leq
\varepsilon.\]
Therefore, there exists a constant $K_2>0$ independent of $r$ and $\varepsilon$ 
such that 
$r_0-\delta_4\leq r\leq r_0$ implies that 
\[\left|\left\langle
H^{\mathbb{D}_{r}}_{\phi}(f_{1}),H^{\mathbb{D}_{r}}_{\psi}(f_{2})
\right\rangle_{\mathbb{D}_{r}} - \left\langle
H^{\mathbb{D}_{r_0}}_{\phi}(f_{1}),
H^{\mathbb{D}_{r_0}}_{\psi}(f_{2 }) \right\rangle_{\mathbb{D}_{r_0}}
\right| \leq \varepsilon K_2.\]
Thus,  we have  
$\ds \lim_{r\to r_0} \left\langle H^{\mathbb{D}_{r}}_{\phi}(f_{1}),
H^{\mathbb{D}_{r}}_{\psi} (f_{2})\right\rangle_{\mathbb{D}_{r}} =
\left\langle
H^{\mathbb{D}_{r_0}}_{\phi}(f_{1}),H^{\mathbb{D}_{r_0}}_{\psi}
(f_{2})\right\rangle_{\mathbb{D}_{r_0}}.$ 
\end{proof}

\section*{Proof of Theorem \ref{ThmMain}}

\begin{proof}[Proof of Theorem \ref{ThmMain}]
Assume that $H^{*}_{\psi}H_{\phi}$ is a compact operator and there exists an
analytic disk $\Delta$ in $\partial\D$ (if not we are done), and two symbols
$\phi$ and $\psi$ that are not holomorphic ``along'' $\Delta.$ Namely, there
exists a holomorphic function $f:\disk\to\Delta$ so that neither $\phi\circ f$
nor $\psi\circ f$ is holomorphic on $\disk.$ Since $\D$ is a convex Reinhardt 
bounded domain Lemma \ref{Lem2} implies that the disk $\Delta$ is either
horizontal or vertical. So without loss of generality we may assume that
$\Delta$ is horizontal and  
\[\D= \bigcup_{w\in H}\left(\Delta_{w}\times\{w\}\right)\]
 where $H\subset \C, \Delta_w=\{z\in \C:(z,w)\in \D\}$
is a disk in $\C$ centered at the origin, and $\Delta=\Delta_{w_0}$ for some
$w_0\in \partial H.$ By using a linear holomorphic map,
$(z,w)\to(z,e^{i\theta_0}(w-w_0))$ for some $\theta_0\in \mathbb{R},$ we
translate the domain $\D$ into $\{(z,w)\in\C^2:\im(w)<0\}.$ Hence without loss
of generality we may assume that $H\subset \{w\in \C:\im(w)<0 \}$ and 
$\D= \bigcup_{w\in H}\left(\Delta_{w}\times\{w\}\right)$ where 
$\Delta_w$'s are  disks centered at the origin and $\Delta=\Delta_0.$ 

Let us extend $\phi(z,0)$ and $\psi(z,0)$ as continuous functions on $\C$ and call the
extensions $\phi_0(z)$ and $\psi_0(z).$ Since $\phi_0$ and $\psi_0$ are
harmonic and not holomorphic on $\Delta_0,$ Theorem 5 in \cite{Zheng89} (see
also \cite[Corollary 6]{AhernCuckovic01}) implies that the product 
$\left(H^{\Delta_0}_{\psi_0}\right)^*H^{\Delta_0}_{\phi_0}$ is a nonzero
operator. Then there exist $f_1,f_2\in A^2(\Delta_0)$ such that 
\[\int_{\Delta_0}H^{\Delta_0}_{\phi_{0}}(f_{1})(z) \overline{H^{\Delta_0}_{\psi_{0}}
(f_{2})(z)}dV(z) \neq 0.\] 
Then by Lemma \ref{Lem5} we can choose $f_1$ and $f_2$ to be holomorphic
polynomials (of one variable).

For convenience, in the following calculations we will abuse the notation as
follows: we will assume that $\phi_0,\psi_0, f_1,$ and $f_2$ are functions of
$z$ only (or functions of $(z,w)$ but independent of $w$).  We remind the
reader that in the computations below, the Bergman projection on the disk
$\Delta_w$ is denoted by $P^{\Delta_w}$ and 
$H^{\Delta_w}_{\eta} (f)=\eta f- P^{\Delta_w}(\eta f)$ for $f\in A^2(\Delta_w)$ and
$\eta\in L^{\infty}(\Delta_w).$ We note that functions  $(z,w)\to
P^{\Delta_w}(\eta f)(z)$ and $(z,w)\to H^{\Delta_w}_{\eta}(f)(z)$ are 
continuous on $\D.$ In case of the first function this can be seen as follows: 
\begin{align*}
|P^{\Delta_{w_0}}(\eta f)(z_0)-P^{\Delta_w}(\eta f)(z)| \leq&
|P^{\Delta_{w_0}}(\eta f)(z_0)-P^{\Delta_{w}}(\eta f)(z_0)|\\
&+\int_{\Delta_w}|K_{\Delta_w}(z_0,\xi)-K_{\Delta_w}(z,\xi)||\eta(\xi)f(\xi)|dV(\xi).
\end{align*}
As $(z,w)$ goes to $(z_0,w_0)$ in $\D,$ the first term on the right hand side goes to zero
by Corollary \ref{Cor1} and the second term goes to zero because
$\sup\{|K_{\Delta_w}(z_0,\xi)-K_{\Delta_w}(z,\xi)|:\xi\in \Delta_w\}$ goes to zero. 
Also Fubini's Theorem implies that these functions are square integrable. 

Let  $g_j\in A^2(H)$ which will be specified later. For fixed $w\in H$ and any
$z\in \Delta_w$
\[H^{\D}_{\phi_0}(f_1g_j)(z,w)=
\phi_0(z,w)f_1(z)g_j(w)-P^{\D}(\phi_0f_1g_j)(z,w)\] 
and 
\[H^{\Delta_w}_{\phi_0(.,w)}(f_1)(z) = 
\phi_0(z,w)f_1(z)-P^{\Delta_w}(\phi_0(., w)f_1)(z)\] 
 imply that
\[ H^{\D}_{\phi_{0}}(f_{1}g_{j})(z,w)-g_{j}(w)H^{\Delta_w}_{\phi_{0}}(f_{1}) =
P^{\D}(\phi_{0}f_{1}g_{j})(z,w)-g_{j}(w)P^{\Delta_w}(\phi_{0}f_{1})(z)\] 
is holomorphic  in $z$ on $\Delta_w.$ 

Using Lemma \ref{Lem1} in the first equality below we get 
\begin{align*}
\int_{\Delta_w}(H^{\D}_{\psi_0})^*H^{\D}_{\phi_{0}}(f_{1}g_{j}))(z,w)
\overline{f_{2}(z)}dV(z)
=&\int_{\Delta_w}P^{\D}(\overline\psi_{0}
H^{\D}_{\phi_{0}}(f_{1}g_{j}))(z,w)\overline{
f_{2}(z)}dV(z)\\
=&\int_{\Delta_w}\overline{\psi_{0}(z,w)}
H^{\D}_{\phi_{0}}(f_{1}g_{j})(z,w)\overline{f_{2}(z)}dV(z)\\
&-\int_{\Delta_w}(I-P^{\D})(\overline\psi_{0}
H^{\D}_{\phi_{0}}(f_{1}g_{j}))(z,w)\overline{f_{2}(z)}dV(z) \\
=&\int_{\Delta_w}H^{\D}_{\phi_{0}}(f_{1}g_{j})(z,w) \overline{
{H^{\Delta_w}_{\psi_{0}}
(f_{2}})(z)}dV(z)\\
&+\int_{\Delta_w}H^{\D}_{\phi_{0}}(f_{1}g_{j})(z,w)
\overline {P^{\Delta_w} (\psi_{0} f_{2})(z)}dV(z)\\
&-\int_{\Delta_w}(I-P^{\D})(\overline\psi_{0}
H^{\D}_{\phi_{0}}(f_{1}g_{j}))(z,w)\overline{f_{2}(z)}dV(z) \\
=& g_{j}(w)\int_{\Delta_w}H^{\Delta_w}_{\phi_{0}}(f_{1})(z)\overline
{H^{\Delta_w}_{\psi_{0}} (f_{2})(z)}dV(z)\\
&+\int_{\Delta_w}H^{\D}_{\phi_{0}}(f_{1}g_{j})(z,w)
\overline {P^{\Delta_w} (\psi_{0} f_{2})(z)}dV(z)\\
&-\int_{\Delta_w}(I-P^{\D})(\overline\psi_{0}
H^{\D}_{\phi_{0}}(f_{1}g_{j}))(z,w)\overline{f_{2}(z)}dV(z).
\end{align*}
If we multiply both sides by $\overline{g_{j}(w)}$ and integrate over $H$ we
get 
\begin{align*}
\left\langle H^{\D}_{\phi_{0}}(f_{1}g_{j}), H^{\D}_{\psi_{0}}(f_{2}g_{j})
\right\rangle  = & \int_{H}|g_{j}(w)|^{2}\int_{\Delta_w}
H^{\Delta_w}_{\phi_{0}}(f_{1})(z)\overline {H^{\Delta_w}_{\psi_{0}}
(f_{2})(z)}dV(z)dV(w)\\
&+ \int_{\D}H^{\D}_{\phi_{0}}(f_{1}g_{j})(z,w)
\overline{{P^{\Delta_w} (\psi_{0} f_{2})(z)}g_j(w)} dV(z,w)\\
&-\int_{\D}(I-P^{\D})(\overline\psi_{0}
H^{\D}_{\phi_{0}}(f_{1}g_{j}))(z,w)\overline{f_{2}(z)g_j(w)}dV(z,w).
\end{align*}
We note that the last integral on the right hand side above is zero. Hence, we have 
\begin{align}\nonumber
\left\langle H^{\D}_{\phi_{0}}(f_{1}g_{j}), H^{\D}_{\psi_{0}}(f_{2}g_{j})
\right\rangle  = & \int_{H}|g_{j}(w)|^{2}\int_{\Delta_w}
H^{\Delta_w}_{\phi_{0}}(f_{1})(z)\overline {H^{\Delta_w}_{\psi_{0}}
(f_{2})(z)}dV(z)dV(w)\\
\label{Eqn3} &+ \int_{\D}H^{\D}_{\phi_{0}}(f_{1}g_{j})(z,w)
\overline{{P^{\Delta_w} (\psi_{0} f_{2})(z)}g_j(w)} dV(z,w).
\end{align}

Our next goal is to show that the second integral on the right hand side of
\eqref{Eqn3} goes to zero while the first one does not as $j$ goes to infinity.

Let $h$ be an entire function on $\C.$ Then 
\begin{align*}
 \int_{\D}H^{\D}_{\phi_{0}}(f_{1}g_{j})(z,w) &\overline{P^{\Delta_w} (\psi_{0}
f_{2})(z)g_j(w)}dV(z,w)\\
=&\int_{\D}H^{\D}_{\phi_{0}}(f_{1}g_{j})(z,w) \overline{h(z)
g_j(w)}dV(z,w)\\
 &+\int_{\D}H^{\D}_{\phi_{0}}(f_{1}g_{j})(z,w) \overline{(P^{\Delta_w}
(\psi_{0} f_{2})(z) -h(z))g_j(w)}dV(z,w)\\
 =&\int_{\D}H^{\D}_{\phi_{0}}(f_{1}g_{j})(z,w) \overline{(P^{\Delta_w}
(\psi_{0} f_{2})(z) -h(z))g_j(w)}dV(z,w).
 \end{align*} 
Using the Cauchy-Schwarz inequality we have
\begin{multline*}
 \int_{\D}|H^{\D}_{\phi_{0}}(f_{1}g_{j})(z,w) \overline{(P^{\Delta_w}
(\psi_{0} f_{2})(z) -h(z))g_j(w)}dV(z,w)|\\
\leq  \|H^{\D}_{\phi_{0}}(f_{1}g_{j})\|
\left(\int_{\D}|(P^{\Delta_w}
(\psi_{0} f_{2})(z) -h(z))g_j(w)|^2dV(z,w)\right)^{1/2}.
\end{multline*}

Now we choose  $g_j(w)=\frac{a_j}{w^{\alpha_j}}$   such that $a_j\to 0,\alpha_j\to 1^-,$
and $\|g_j\|_H=1$ as $j\to \infty.$  Then  one can show that 
\begin{align}\label{Eqn4}
\|H^{\D}_{\phi-\phi_0}(g_j)\|\leq \|(\phi-\phi_0)g_j\|\to 0  \text{ as } j\to \infty
\end{align}
because $g_{j}$ goes to $0$ uniformly on any compact set  away from
$\Delta_0$ and $\phi-\phi_0=0$ on $\Delta_0.$  
 
Let $\varepsilon>0$ be fixed. Then there exists  a set $L_{\varepsilon}\Subset \Delta_0$  
such that $\|\psi_0f_2\|_{L^2(\Delta_0\setminus L_{\varepsilon})}\leq \varepsilon/2.$
Furthermore, Lemma \ref{Lem5} and \cite[Proposition 1.4.1]{KrantzBook} imply that
there exists an entire function $h$ such that  
\[\|P^{\Delta_0}(\psi_{0} f_{2})-h\|_{L^2(\Delta_0)} \leq \varepsilon \text{ and }
\sup\{|P^{\Delta_0}(\psi_{0} f_{2})(z)-h(z)| :z\in
\overline{L_{\varepsilon}}\} \leq \varepsilon/2.\]
 Then by Lemma \ref{Lem3} we can choose
$\delta_1>0$ such that  $|w|<\delta_1$ implies that $L_{\varepsilon}\Subset \Delta_w.$
Furthermore, $\delta_1$ can be chosen so that  
\[\|\psi_0f_2\|_{L^2(\Delta_w\setminus \Delta_0)}+\|h\|_{L^2(\Delta_w\setminus
\Delta_0)}\leq \varepsilon/2.\]
Finally, Lemma \ref{Lem3} and Corollary \ref{Cor1} imply that there exists
$\delta_2>0$ such that 
\[\sup\{|P^{\Delta_0}(\psi_{0} f_{2})(z)-P^{\Delta_w}(\psi_{0} f_{2})(z)| :z\in
\overline{L_{\varepsilon}}\} \leq \varepsilon/2,\] 
for $|w|<\delta_2$ and  Lemma \ref{Lem3} and Lemma \ref{Lem4} imply that there
exists $\delta_3>0$ such that 
\[\|E_{\Delta_w}P^{\Delta_w}(T_{L_{\varepsilon}}\psi_{0} f_{2})-
E_{\Delta_0}P^{\Delta_0}(T_{L_{\varepsilon}}\psi_{0} f_{2}) 
\|_{L^2(\C)}  \leq   \varepsilon\] 
 for $|w|<\delta_3.$ 
 
If we put all these together we have the following: for $\varepsilon>0$ there exist  
$\delta=\min\{\delta_1,\delta_2,\delta_3\}>0,$ a set $L_{\varepsilon}\Subset
\Delta_0,$  and an entire function $h$  such that $|w|<\delta$ implies that 

\begin{itemize}
\item[i.] $L_{\varepsilon}\Subset \Delta_w, \|\psi_{0}
f_{2}\|_{L^2(\Delta_w\setminus L_{\varepsilon})} \leq
\varepsilon,$ and  $\|h\|_{L^2(\Delta_w\setminus \Delta_0)}\leq \varepsilon/2,$
\item[ii.] $\|P^{\Delta_0}(\psi_{0} f_{2})-h\|_{L^2(\Delta_0)} \leq
\varepsilon,$ 
\item[iii] $ \sup\{|P^{\Delta_w}(\psi_{0} f_{2})(z)-h(z)| :z\in
\overline{L_{\varepsilon}}\} \leq \varepsilon,$ 
\item[iv.]  $\|E_{\Delta_w}P^{\Delta_w}(T_{L_{\varepsilon}}\psi_{0}
f_{2})- E_{\Delta_0}P^{\Delta_0}(T_{L_{\varepsilon}}\psi_{0} f_{2})
\|_{L^2(\C)} 
 \leq   \varepsilon.$ 
\end{itemize}

Now we choose $j_0$ so that 
\[|g_j(w)|<\varepsilon
\left(1+\int_{\D}|P^{\Delta_w}(\psi_{0}f_{2})(z)-h(z)|^2dV(z,w)\right)^{-\frac{1}{2}} \]
for $|w|\geq \delta$ and $j\geq j_0.$ Let us define
 $K_{\delta}=\cup_{|w|\geq \delta} \Delta_w \subset \D.$  
Then we have 
\begin{align*}
\int_{\D}|(P^{\Delta_w}&(\psi_{0} f_{2})(z)-h(z)) g_j(w)|^2dV(z,w)\\
=&\int_{K_{\delta}}|(P^{\Delta_w}(\psi_{0} f_{2})(z)-h(z)) g_j(w)|^2dV(z,w)\\
&+\int_{\D\cap (L_{\varepsilon}\times B(0,\delta))}|(P^{\Delta_w}(\psi_{0}
f_{2})(z)-h(z))
g_j(w)|^2dV(z,w)\\
&+ \int_{\D\setminus (K_{\delta} \cup (L_{\varepsilon}\times
B(0,\delta))}|(P^{\Delta_w}(\psi_{0} f_{2})(z)-h(z))
g_j(w)|^2dV(z,w)\\
 \lesssim& \sup\{|g_j(w)|^2:|w|\geq \delta\} \int_{K_{\delta}}
|P^{\Delta_w}(\psi_{0} f_{2})(z)-h(z)|^2dV(z,w)\\
&+\sup\{|P^{\Delta_w}(\psi_{0} f_{2})(z)-h(z)|^2 :z\in
\overline{L_{\varepsilon}},|w|\leq
\delta \}\int_{H}|g_j(w)|^2dV(w)\\
&+ \int_{|w|<\delta}|g_j(w)|^2\int_{\Delta_w\setminus L_{\varepsilon}}
|P^{\Delta_w}(\psi_{0} f_{2})(z)-h(z)|^2dV(z)dV(w)\\
\lesssim & \varepsilon^2 
+\int_{|w|<\delta}|g_j(w)|^2\int_{\Delta_w\setminus L_{\varepsilon}}
|P^{\Delta_w}(\psi_{0} f_{2})(z)-h(z)|^2dV(z)dV(w).
\end{align*}
We note that iii. is used in the last inequality. Then
\begin{align*}
\int_{\Delta_w\setminus L_{\varepsilon}} &|P^{\Delta_w}(\psi_{0}
f_{2})(z)-h(z)|^2dV(z)\\
\lesssim &\int_{\Delta_w\setminus L_{\varepsilon}} 
|P^{\Delta_w}((1-T_{L_{\varepsilon}})\psi_{0}
f_{2})(z)|^2dV(z)\\
&+\int_{\Delta_w\setminus L_{\varepsilon}}
|P^{\Delta_w}(T_{L_{\varepsilon}}\psi_{0}
f_{2})(z)-E_{\Delta_0}P^{\Delta_0}(T_{L_{\varepsilon}}\psi_{0}
f_{2})(z)|^2dV(z)\\
&+\int_{\Delta_w\setminus L_{\varepsilon}}
|E_{\Delta_0}P^{\Delta_0}(T_{L_{\varepsilon}}\psi_{0}
f_{2})(z)-E_{\Delta_0}P^{\Delta_0}(\psi_{0} f_{2})(z)|^2dV(z)\\
&+ \int_{\Delta_w\setminus L_{\varepsilon}} |E_{\Delta_0}P^{\Delta_0}(\psi_{0}
f_{2})(z)-h(z)|^2dV(z).
\end{align*}
Let  $|w|<\delta.$ Then by i. we have 
\[\|P^{\Delta_w}((1-T_{L_{\varepsilon}})\psi_{0}
f_{2})\|^2_{L^2(\Delta_w\setminus
L_{\varepsilon})} \leq \|(1-T_{L_{\varepsilon}})\psi_{0}
f_{2})\|^2_{L^2(\Delta_w)} 
\leq \varepsilon^2\]
and by iv. we have 
\begin{align*}
\int_{\Delta_w\setminus L_{\varepsilon}}
&|P^{\Delta_w}(T_{L_{\varepsilon}}\psi_{0}
f_{2})(z)-E_{\Delta_0}P^{\Delta_0}(T_{L_{\varepsilon}}\psi_{0} f_{2})(z)|^2dV(z)
\\
\leq  &
\|E_{\Delta_w}P^{\Delta_w}(T_{L_{\varepsilon}}\psi_{0} f_{2})-
E_{\Delta_0}P^{\Delta_0}(T_{L_{\varepsilon}}\psi_{0} f_{2}) \|^2_{L^2(\C)}\\
 \leq  & \varepsilon^2.
\end{align*}
By i. again and the fact that $\Delta_0\subset \Delta_w$ we have 
\[\int_{\Delta_w\setminus L_{\varepsilon}}
|E_{\Delta_0}P^{\Delta_0}(T_{L_{\varepsilon}}\psi_{0}
f_{2})(z)-E_{\Delta_0}P^{\Delta_0}(\psi_{0} f_{2})(z)|^2dV(z) 
\leq \|(1-T_{L_{\varepsilon}})\psi_{0} f_{2})\|^2_{L^2(\Delta_0)}\leq
\varepsilon^2.\] 
Furthermore, i. and  ii. imply that 
\begin{align*}
 \int_{\Delta_w\setminus L_{\varepsilon}} |E_{\Delta_0}P^{\Delta_0}(\psi_{0}
f_{2})(z)-h(z)|^2dV(z)=&\int_{\Delta_0\setminus L_{\varepsilon}}
|P^{\Delta_0}(\psi_{0}
f_{2})(z)-h(z)|^2dV(z)\\
&+\int_{\Delta_w\setminus\Delta_0} |h(z)|^2dV(z)\\
\lesssim& \varepsilon^2
\end{align*}
Therefore, we have 
\[\int_{\Delta_w\setminus L_{\varepsilon}} |P^{\Delta_w}(\psi_{0}
f_{2})(z)-h(z)|^2dV(z)
\lesssim \varepsilon^2 \text{ for } |w|\leq \delta .\]
Furthermore, since $\int_H|g_j(w)|^2dV(w)=1$ and $\D$ is bounded there exists a
constant $C>0$  such that $\|g_j\|<C$. Therefore, 
\[\int_{\D}H^{\D}_{\phi_{0}}(f_{1}g_{j})(z,w) \overline{P^{\Delta_w} (\psi_{0}
f_{2})(z)g_j(w)}dV(z,w)\to 0 \text{ as } j\to\infty.\]
  
Now we will show that the first integral on the right hand side of
\eqref{Eqn3} stays away from zero as $j$ goes to infinity. We remind the
reader that $f_1$ and $f_2$ are holomorphic polynomials such that
$\int_{\Delta_0}H^{\Delta_0}_{\phi_{0}}(f_{1})(z) 
\overline {H^{\Delta_0}_{\psi_{0}} (f_{2})}(z)dV(z)\neq 0.$ 
Therefore, by Lemma \ref{Lem6}, without loss of generality and by choosing a
smaller $\delta>0,$ if necessary, we may assume that  there exists $\beta>0$ such that 
\[\text{Re}\left(\int_{\Delta_w}H^{\Delta_w}_{\phi_{0}}(f_{1})(z) \overline
{H^{\Delta_w}_{\psi_{0}} (f_{2})(z)}dV(z)\right) > \beta \]
for $|w|<\delta.$ The mass of $g_j$ ``accumulates" at the origin in the sense that 
$\int_{H}|g_{j}(w)|^{2}dV(w)=1$ for all $j$ while  $g_j(w)\to 0$ as $w$ stays away 
from $\Delta_0.$ Then  there exists $j_0$ so that  $j\geq j_0$ implies that 
\[\left|\int_{K_{\delta}}|g_{j}(w)|^{2}H^{\Delta_w}_{\phi_{0}}
(f_{1})(z)\overline{H^{\Delta_w}_{\psi_{0}}  (f_{2})(z)}dV(z)dV(w)\right| <\beta/4.\]
On the other hand, there exists $j_1$ such that 
\begin{align*}
\text{Re}\left(\int_{\D\setminus K_{\delta}}|g_{j}(w)|^{2}
H^{\Delta_w}_{\phi_{0}}(f_{1})(z) \overline{H^{\Delta_w}_{\psi_{0}}
(f_{2})(z)}dV(z,w)\right) >&\beta \int_{\{w\in H:|w|<\delta\}}|g_j(w)|^2dV(w)\\
 >&\beta/2
\end{align*}
for $j\geq j_1.$ Therefore, for $j\geq \max\{j_0,j_1\}$ we have 
\[\text{Re}\left(\int_{\D}|g_{j}(w)|^{2} H^{\Delta_w}_{\phi_{0}}(f_{1})(z)
\overline{H^{\Delta_w}_{\psi_{0}} (f_{2})(z)}dV(z,w)\right)>\beta/2.\]
This shows that  the first integral on the right hand side of \eqref{Eqn3} stays away
from zero. Hence, by \eqref{Eqn3} again, $\left\langle
H^{\D}_{\phi_0}(f_{1}g_{j}),H^{\D}_{\psi_0}(f_{2}g_{j})\right\rangle$ does not converge
to zero as $j$ goes to infinity.

Now we we will show that 
$\left\langle \left(H^{\D}_{\psi}\right)^*H^{\D}_{\phi}(f_{1}g_{j}),
f_{2}g_{j}\right\rangle$ does not converge to zero which contradicts the assumption that 
$H_{\psi}^*H_{\phi}$ is compact.  
\begin{align*}
\left|\left\langle \left(H^{\D}_{\psi}\right)^* H^{\D}_{\phi}(f_{1}g_{j}),
f_{2}g_{j}\right\rangle \right|=& \left|\left\langle
H^{\D}_{\phi}(f_{1}g_{j}),H^{\D}_{\psi}(f_{2}g_{j})\right\rangle \right|\\
\lesssim &\left|\left\langle H^{\D}_{\phi_{0}}(f_{1}g_{j}),H^{\D}_{\psi_{0}}(f_{2}g_{j}
\right\rangle\right| +\|(\phi-\phi_0)f_{1}g_{j}\|
\|\psi_0f_2g_j\|\\
&+\|\phi_0f_1g_j\|\|(\psi-\psi_0)f_{2}g_{j}\|+ \|(\phi-\phi_0)f_{1}g_{j}\|
\|(\psi-\psi_0)f_{2}g_{j}\|
\end{align*}
We note that by \eqref{Eqn4} the last three terms on the right hand side of the inequality
above go to zero as $j$ goes to $\infty$ and we just showed that the first term stays
away from zero.  Hence,  
$\left\langle \left(H^{\D}_{\psi}\right)^*H^{\D}_{\phi}(f_{1}g_{j}),f_{2}g_{j}
\right\rangle$ does not converge to zero.
\end{proof}

\singlespace


\begin{thebibliography}{CLNZ07}

\bibitem[A{\v{C}}01]{AhernCuckovic01}
Patrick Ahern and {\v{Z}}eljko {\v{C}}u{\v{c}}kovi{\'c}, \emph{A theorem of
  {B}rown-{H}almos type for {B}ergman space {T}oeplitz operators}, J. Funct.
  Anal. \textbf{187} (2001), no.~1, 200--210.

\bibitem[Axl88]{Axler88}
Sheldon Axler, \emph{Bergman spaces and their operators}, Surveys of some
  recent results in operator theory, {V}ol. {I}, Pitman Res. Notes Math. Ser.,
  vol. 171, Longman Sci. Tech., Harlow, 1988, pp.~1--50.

\bibitem[CKL04]{ChoeKooLee04}
Boo~Rim Choe, Hyungwoon Koo, and Young~Joo Lee, \emph{Commuting {T}oeplitz
  operators on the polydisk}, Trans. Amer. Math. Soc. \textbf{356} (2004),
  no.~5, 1727--1749 (electronic).

\bibitem[CLNZ07]{ChoeLeeNamZheng07}
Boo~Rim Choe, Young~Joo Lee, Kyesook Nam, and Dechao Zheng, \emph{Products of
  {B}ergman space {T}oeplitz operators on the polydisk}, Math. Ann.
  \textbf{337} (2007), no.~2, 295--316.

\bibitem[{\v{C}}{\c{S}}09]{CuckovicSahutoglu09}
{\v{Z}}eljko {\v{C}}u{\v{c}}kovi{\'c} and S{\"o}nmez {\c{S}}ahuto{\u{g}}lu,
  \emph{Compactness of {H}ankel operators and analytic discs in the boundary of
  pseudoconvex domains}, J. Funct. Anal. \textbf{256} (2009), no.~11,
  3730--3742.

\bibitem[{\v{C}}{\c{S}}10]{CuckovicSahutoglu10}
\bysame, \emph{Compactness of products of {H}ankel operators on the polydisk
  and some product domains in $\mathbb{C}^2$}, J. Math. Anal. Appl.
  \textbf{371} (2010), no.~1, 341--346.

\bibitem[DS04]{DurenSchusterBook}
Peter Duren and Alexander Schuster, \emph{Bergman spaces}, Mathematical Surveys
  and Monographs, vol. 100, American Mathematical Society, Providence, RI,
  2004.

\bibitem[DT01]{DingTang01}
Xuanhao Ding and Shengqiang Tang, \emph{The pluriharmonic {T}oeplitz operators
  on the polydisk}, J. Math. Anal. Appl. \textbf{254} (2001), no.~1, 233--246.

\bibitem[FS98]{FuStraube98}
Siqi Fu and Emil~J. Straube, \emph{Compactness of the
  {$\overline\partial$}-{N}eumann problem on convex domains}, J. Funct. Anal.
  \textbf{159} (1998), no.~2, 629--641.

\bibitem[JP08]{JarnickiPflugBook2}
Marek Jarnicki and Peter Pflug, \emph{First steps in several complex variables:
  {R}einhardt domains}, EMS Textbooks in Mathematics, European Mathematical
  Society (EMS), Z\"urich, 2008.

\bibitem[Kra01]{KrantzBook}
Steven~G. Krantz, \emph{Function theory of several complex variables}, AMS
  Chelsea Publishing, Providence, RI, 2001, Reprint of the 1992 edition.

\bibitem[Le10]{Le10}
Trieu Le, \emph{Compact {H}ankel operators on generalized {B}ergman spaces of
  the polydisc}, Integral Equations Operator Theory \textbf{67} (2010), no.~3,
  425--438.

\bibitem[Ran86]{RangeBook}
R.~Michael Range, \emph{Holomorphic functions and integral representations in
  several complex variables}, Graduate Texts in Mathematics, vol. 108,
  Springer-Verlag, New York, 1986.

\bibitem[Str10]{StraubeBook}
Emil~J. Straube, \emph{Lectures on the $\mathcal{L}^{2}$-{S}obolev theory of
  the $\overline\partial$-{N}eumann problem}, ESI Lectures in Mathematics and
  Physics, vol.~7, European Mathematical Society (EMS), Z\"urich, 2010.

\bibitem[Zhe89]{Zheng89}
Dechao Zheng, \emph{Hankel operators and {T}oeplitz operators on the {B}ergman
  space}, J. Funct. Anal. \textbf{83} (1989), no.~1, 98--120.

\bibitem[Zhu07]{ZhuBook}
Kehe Zhu, \emph{Operator theory in function spaces}, second ed., Mathematical
  Surveys and Monographs, vol. 138, American Mathematical Society, Providence,
  RI, 2007.

\end{thebibliography}
\end{document}